\documentclass[11pt, leqno]{amsart}
\usepackage{amsmath, amssymb, latexsym}
\usepackage{mathrsfs}
\usepackage{enumerate}
\usepackage{color}
\usepackage[colorlinks=true, pdfstartview=FitV, linkcolor=blue,%
citecolor=blue, urlcolor=blue]{hyperref}
\usepackage[all, knot]{xy}
\xyoption{arc}
\usepackage{tikz}
\usepackage{amsmath}
\baselineskip=12pt

\setlength{\textwidth}{15cm} \setlength{\textheight}{18cm}
\setlength{\oddsidemargin}{1.3cm} \setlength{\evensidemargin}{1.3cm}

\numberwithin{equation}{section}

\newtheorem{theorem}{Theorem}[section]
\newtheorem{corollary}[theorem]{Corollary}

\newtheorem{proposition}[theorem]{Proposition}

\newtheorem{example}[theorem]{Example}

\theoremstyle{definition}
\newtheorem{definition}[theorem]{Definition}
\newtheorem{remark}[theorem]{Remark}

\newcommand{\Q}{\mathbb{Q}}

\newcommand{\Z}{\mathbb{Z}}

\newcommand{\h}{\mathfrak{h}}
\newcommand{\g}{\mathfrak{g}}

\title[Abstract crystals for quantum Borcherds-Bozec algebras]
{Abstract crystals for  quantum \\  Borcherds-Bozec algebras}

\author[Zhaobing Fan]{Zhaobing Fan}
\address{Harbin Engineering University,
Harbin, China}
\email{fanz@ksu.edu}
\thanks{ }

\author[Seok-Jin Kang]{Seok-Jin Kang}
\address{Korea Research Institute of Arts and Mathematics,
Asan-si, Chungcheongnam-do, 31551, Korea}
\email{soccerkang@hotmail.com}

\thanks{}

\author[Young Rock Kim]{Young Rock Kim${}^{*}$}
\address{Graduate School of Education, Hankuk University of Foreign Studies, Seoul, 02450,  Korea}
\email{rocky777@hufs.ac.kr} %
\thanks{${}^{*}$ Corresponding author. All authors contribute equally.}

\author[Bolun Tong]{Bolun Tong}
\address{Harbin Engineering University,
Harbin, China}
\email{tbl\_2019@hrbeu.edu.cn}

\address{}
\keywords{quantum Borcherds-Bozec algebra, abstract crystals, crystal embedding theorem}

\subjclass[2010] {17B37, 17B67, 16G20}

\begin{document}

\begin{abstract}

In this paper, we develop the theory of abstract crystals for quantum Borcherds-Bozec algebras.
 Our construction is different from the one given by Bozec.
We further prove the crystal embedding theorem and provide a characterization of ${B}(\infty)$ and ${B}(\lambda)$  as its application,
where ${B}(\infty)$ and ${B}(\lambda)$ are  the crystals of the negative half part of the quantum Borcherds-Bozec algebra $U_q(\g)$ and its irreducible highest weight module $V(\lambda)$, respectively.

\end{abstract}

\maketitle

\setcounter{tocdepth}{1}
\tableofcontents

\section*{Introduction}

\vskip 2mm

The aim of this paper is to develop the theory of {\it abstract crystals} for quantum Borcherds-Bozec algebras.
To begin with, let us quickly review the term of  Borcherds-Bozec algebras.
The Cartan matrix of a semisimple Lie algebra is positive definite, where all of its  diagonal entries are 2.
 In \cite{Kac68, Moody68}, Kac and Moody introduced a family of infinite dimensional Lie algebras  by removing the condition of positive definitiveness,
which are now called the {\it Kac-Moody algebras}.
 In \cite{Bor88}, Borcherds generalized the notion of  Kac-Moody algebras by allowing the diagonal entries of the Cartan matrices  to be non-positive.
The Lie algebras and Cartan matrices thus obtained are called the {\it Borcherds algebras} and Borcherds-Cartan matrices, respectively.
Moreover, the Borcherds algebras are generated by positive and negative simple root vectors.
 Associated to Borcherds-Cartan matrices, the {\it Borcherds-Bozec algebras} is a further generalization of Kac-Moody algebras,
which are generated by higher degree positive and negative simple root vectors.

\vskip 2mm

 The {\it quantum  Borcherds  algebras} (or {\it quantum generalized Kac-Moody algebras}) were introduced by Kang \cite{Kang95},
 as a quantum deformation of the universal enveloping algebras of Borcherds algebras.
The {\it quantum Borcherds-Bozec algebras} were introduced by Bozec in his research of perverse sheaves theory for quivers with loops \cite{Bozec2014b, Bozec2014c, BSV2016}.
They are a further generalization of quantum Borcherds algebras.
More precisely, for each imaginary index $i\in I^{\text{im}}$, i.e., those $i\in I$ satisfying $a_{ii} \leq 0$ in the Cartan matrix,
 there are infinitely many generators $e_{il}, f_{il}$ $(l \in \Z_{>0})$,
 whose degrees are $l$ multiples of $\alpha_i$ and $-\alpha_i$.
 The commutation relations between them are rather complicated (cf. \cite{FKKT}).
 But, thanks to Bozec, there exists a set of primitive generators in quantum Borcherds-Bozec algebras with better properties and simpler commutation relations. This allows us to generalize the theories of quantum groups to quantum Borcherds-Bozec algebras.
\vskip 2mm

The canonical basis theory was first introduced by Lusztig in the ADE case in \cite{Lus90}, arising from his geometric construction of the negative part of the quantum groups, and it has been generalized to general cases by Lusztig \cite{Lus91,Lusztig}.
Meanwhile, Kashiwara constructed crystal base and global base for quantum groups associated with symmetrizable Kac-Moody algebras in an algebraic way \cite{Kas90,Kas91}. In \cite{Lus93}, Grojnowski and Lusztig proved that the global crystal base coincides with the canonical base introduced by Lusztig in \cite{Lus90}.
Since then, the crystal basis theory has become one of the most central themes in combinatorial and  geometric  representation theory of quantum groups.
In \cite{JKK2005},  Jeong, Kang and Kashiwara developed the crystal basis theory for quantum Borcherds algebras.
  Later on, together with Shin, they further introduced the notion of abstract crystals for quantum Borcherds algebras and investigated their fundamental properties  \cite{JKKS2007}.
\vskip 2mm

In \cite{Bozec2014c}, Bozec studied the crystal basis theory for quantum Borcherds-Bozec algebra $U_q(\g)$.
He defined the crystal basis $({ L} (\infty),{B}(\infty))$ for $U_q^-(\g)$ and $({ L}(\lambda),{B}(\lambda))$ for $V(\lambda)$, respectively,
 where $V(\lambda)$ is the irreducible highest weight $U_q(\g)$-module with a dominant integral  highest weight $\lambda$.
 He also constructed  {\it generalized crystals} and  their geometric realization for
 $U_{q}^{-}(\g)$ and $V(\lambda)$ based on the theory of perverse sheaves on Lusztig's and Nakajima's quiver varieties associated with quivers with loops (cf. \cite{KS1997, S2002, KKS2009, KKS2012}).

 \vskip 2mm

In this paper, we develop  the theory of abstract crystals for quantum Borcherds-Bozec algebras.
 Our construction is different from the one given in \cite{Bozec2014c}, where Bozec followed the framework of \cite{JKK2005}.
  Instead, we take the approach given in \cite{JKKS2007}, which  is simpler and more natural. We provide the abstract crystal structure on ${B}(\infty)$ and ${B}(\lambda)$ following our definition.
Moreover, we define the tensor products of abstract crystals  which are shown to be abstract crystals.   We also show that the associativity law holds for the tensor products of abstract crystals.

\vskip 2mm

 Furthermore, we prove the {\it crystal embedding theorem},
 which yields a procedure to determine the structure of the crystal ${B}(\infty)$ in term of elementary crystals.
 We provide a characterization of ${B}(\infty)$ and ${B}(\lambda)$ as an application of the crystal embedding theorem.
  The characterization of ${B}(\lambda)$ in the current paper is different from the one given by Bozec in \cite{Bozec2014c},
 where the method of Joseph in \cite{Joseph} was employed.

\vskip 2mm

 Our theory of abstract crystals can be applied to various general settings. For example, Bozec's geometric construction of the crystals $B(\infty)$ and $B(\lambda)$ can be reproduced through our approach.
  Our construction of $B(\infty)$ and $B(\lambda)$ can also be used directly to the theory of (dual) perfect bases for quantum Borcherds-Bozec algebras. Moreover, our theory will play a crucial role in the categorification of quantum Borcherds-Bozec algebras and their highest weight modules via Khovanov-Lauda-Rouquier algebras and their cyclotomic quotients. Currently, we are working on these problems as well as other applications.
We believe that our work on abstract crystals will provide a strong algebraic foundation for a wide variety of interesting developments in  combinatorial and geometric representation theory of quantum Borcherds-Bozec algebras.
\vskip 2mm

%

This paper is organized as follows.
In Section 1, we review some of the basic facts on  quantum Borcherds-Bozec algebras.
Under the assumption \eqref{eq:assumption}, the presentation of quantum Borcherds-Bozec algebras becomes  simpler. In Section 2, we define the notion of abstract crystals and investigate their fundamental properties. In Section 3, We define the tensor products of abstract crystals having all the desired properties.
In Section 4, we prove the crystal embedding theorem. As an application,
we provide a characterization of $ B(\infty)$ and $B(\lambda)$.

\vskip 2mm

\noindent\textbf{Acknowledgements.}

Z. Fan was partially supported by the NSF of China grant 11671108, the NSF of Heilongjiang Province grant JQ2020A001, and the Fundamental Research Funds for the central universities. S.-J.  Kang was supported by Hankuk University of Foreign Studies Research Fund. Y. R. Kim was supported by the Basic Science Research Program of the NRF (Korea) under grant No. 2015R1D1A1A01059643. S.-J.  Kang would like to express his sincere gratitude to Harbin Engineering University for their hospitality during his visit in July and November, 2019.

\vspace{12pt}

\section{The quantum Borcherds-Bozec algebras}

\vskip 2mm

Let $I$ be a finite or countably infinite index set.
An integer-valued matrix $A=(a_{ij})_{i,j\in I}$ is called an {\it even symmetrizable Borcherds-Cartan matrix} if it satisfies the following conditions:
\begin{itemize}
\item[(i)] $a_{ii}=2, 0, -2, -4, \ldots$,

\item[(ii)] $a_{ij} \in \Z_{\le 0}$ for $i \neq j$,

\item[(iii)] there is a diagonal matrix $D=\text{diag} (r_i \in
\Z_{>0} \mid i \in I)$ such that $DA$ is symmetric.
\end{itemize}
\vskip 2mm

Let $I^{\text{re}}:=\{ i \in I \mid a_{ii}=2 \}$, $I^{\text{im}}:= \{ i \in I \mid a_{ii} \le 0 \}$, and
$I^{\text{iso}}: =\{ i \in I \mid a_{ii}=0 \}$.
The elements of  $I^{\rm{re}}$ (resp. $I^{\text{im}}$, $I^{\text{iso}}$) are called {\it
real indices} (resp. {\it imaginary indices}, {\it isotropic indices}).

\vskip 2mm

A {\it Borcherds-Cartan datum} consists of

\begin{itemize}

\item[(a)] an even symmtrizable Borcherds-Cartan matrix,

\item[(b)] a free abelian group $P$, the {\it weight lattice},

\item[(c)] $\Pi = \{\alpha_{i}  \in P \mid i \in I  \}$, the set of {\it simple roots},

\item[(d)] $P^{\vee} := Hom_{\Z} (P, \,  \Z)$, the {\it dual weight lattice},

\item[(e)] $\Pi^{\vee} = \{ h_i  \mid i \in I \}$, the set of {\it simple coroots}

\end{itemize}

satisfying the following conditions:

\begin{itemize}

\item[(i)] $\langle h_i, \alpha_j  \rangle = a_{ij}$ for all $i, j \in I$,

\item[(ii)] $\Pi$ is linearly independent over $\Z$,

\item[(iii)] for each $i \in I$, there is an element $\Lambda_{i} \in P$, called the {\it fundamental weights},
such that
$$\langle h_{j}, \Lambda_{i} \rangle =\delta_{ij} \quad \text{for all} \ j \in I.$$

\end{itemize}

We denote by
$$P^+:=\{\lambda \in P \mid \langle h_i, \lambda \rangle \geq 0 \ \ \text{for all} \ i \in I \},$$
the set of {\it dominant integral weights}. The free abelian group $Q:=\bigoplus_{i \in I} {\Z \alpha_i}$ is called the {\it root lattice}. Set $Q^+=\sum_{i \in I}{\Z_{\geq0}\alpha_i}$ and $Q^-=-Q^+$. For $\beta=\sum k_i\alpha_i \in Q^+ $, we define its {\it height} to be $ \text{ht} (\beta):=\sum k_i$.

\vskip 2mm

Set  $\h = \Q \otimes_{\Z} P^{\vee}$, the {\it Cartan subalgebra}. Since $A$ is symmetrizable and
$\Pi$ is linearly independent, there is a non-degenerate symmetric bilinear form $( \  , \ )$ on $\h^*$ satisfying
$$(\alpha_i,\lambda)=r_i \langle h_i, \lambda  \rangle \ \ \text{for all} \  i \in I, \, \lambda \in \h^*.$$

For $i \in I^{\text{re}}$, we define the {\it simple reflection}
$\omega_{i} \in GL(\h^{*})$ by
$$\omega_{i} (\lambda) =  \lambda- \langle h_i, \lambda \rangle \alpha_{i}
\ \ \text{for} \ \lambda \in \h^{*}.$$
The subgroup $W$ of $GL(\h^*)$ generated by $\omega_{i}$ $(i \in I^{\text{re}})$ is called
the {\it Weyl group}. 
 One can easily verify that the
symmetric bilinear form $(\ , \ )$ is $W$-invariant.

\vskip 2mm

Let $I^{\infty}:= (I^{\text{re}} \times \{1\}) \cup (I^{\text{im}}
\times \Z_{>0})$. For simplicity, we shall often write $i$ instead of $(i,1)$ for $i \in I^{\text{re}}$.
Let $q$ be an indeterminate and set
$$q_i=q^{r_i}\ {\rm and}\  q_{(i)}=q^{\frac{(\alpha_i,\alpha_i)}{2}}.$$
Note that $q_i= q_{(i)}$ if $i \in I^{\text {re}}$. For each $i \in I^{\text {re}}$ and $n \in \Z_{\geq 0}$, we define
$$[n]_i=\frac{q_{i}^n-q_{i}^{-n}}{q_{i}-q_{i}^{-1}},\quad [n]_i!=\prod_{k=1}^n [k]_i\quad {\rm and}\quad
{\begin{bmatrix} n \\ k \end{bmatrix}}_i=\frac{[n]_i!}{[k]_i![n-k]_i!}.$$

\vskip 2mm

Let $\mathscr F$ be the free associative algebra over $\Q(q)$ generated by the symbols $f_{il}$ for $(i,l)\in I^{\infty}$.
By setting $\deg f_{il}= -l \alpha_{i} $, $\mathscr F$ becomes a $Q^-$-graded algebra.
For any homogeneous element $u$ in $\mathscr F$, we denote by $|u|$ the degree of $u$, and for any $A \subseteq Q^{-}$, we set
${\mathscr F}_{A}=\{ x\in {\mathscr F} \mid |x| \in A \}$.

\vskip 2mm

We define a {\it  twisted} multiplication on $\mathscr F\otimes\mathscr F$ by
$$(x_1\otimes x_2)(y_1\otimes y_2)=q^{-(|x_2|,|y_1|)}x_1y_1\otimes x_2y_2,$$
and a comultiplication $\delta\colon \mathscr F\rightarrow \mathscr F\otimes\mathscr F$ by
$$\delta(f_{il})=\sum_{m+n=l}q_{(i)}^{-mn}f_{im}\otimes f_{in} \ \ \text{for}  \ (i,l)\in I^{\infty}.$$
Here $f_{i0}=1$ and $f_{il}=0$ for $l<0$ by convention.

\vskip 2mm

\begin{proposition}\cite{Bozec2014b,Bozec2014c} \
{\rm For any family $\nu=(\nu_{il})_{(i,l)\in I^{\infty}}$ of non-zero elements in $\Q(q)$, there exists a symmetric bilinear form $( \ , \ )_L \colon \mathscr F\times\mathscr F\rightarrow \Q(q)$ such that

\begin{itemize}
\item[(a)] $(x, y)_{L} =0$ if $|x| \neq |y|$.

\item[(b)] $(1,1)_{L} = 1$.

\item[(c)] $(f_{il}, f_{il})_{L} = \nu_{il}$ for all $(i,l) \in
I^{\infty}$.

\item[(d)] $(x, yz)_{L} = (\delta(x), y \otimes z)_L$  for all $x,y,z
\in {\mathscr F}$,
\end{itemize}
 where $(x_1\otimes x_2,y_1\otimes y_2)_L=(x_1,y_1)_L(x_2,y_2)_L$ for any $x_1,x_2,y_1,y_2\in {\mathscr F}$.}
\end{proposition}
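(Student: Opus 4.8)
The plan is to build the form explicitly by a length/height induction and to reserve the real effort for its symmetry. Condition (a) means the form must be block-diagonal for the $Q^-$-grading, so it suffices to produce a pairing $\mathscr F_{-\beta}\times\mathscr F_{-\beta}\to\Q(q)$ for each $\beta\in Q^+$ and then to check (a)--(d). On the degree-zero piece $\mathscr F_0=\Q(q)\cdot 1$ I set $(1,1)_L=1$, which is (b). For the base layer I note that, since $\Pi$ is linearly independent over $\Z$, the equality $|f_{il}|=|f_{jm}|$ forces $(i,l)=(j,m)$; hence among single generators the only nonzero pairings are $(f_{il},f_{il})_L=\nu_{il}$, which gives (c), all other pairings of single generators vanishing by (a).

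Next I use (d) as a recursion that peels generators off the second argument. Since the monomials $f_{i_1l_1}\cdots f_{i_rl_r}$ form a basis of the free algebra $\mathscr F$, it is enough to define the form on such monomials and extend bilinearly. Writing $y=y'f_{il}$ with $f_{il}$ the rightmost letter and applying (d), I get $(x,y)_L=(\delta(x),y'\otimes f_{il})_L=\sum (x_{(1)},y')_L\,(x_{(2)},f_{il})_L$, where $\delta(x)=\sum x_{(1)}\otimes x_{(2)}$ is computed from $\delta(f_{il})=\sum_{m+n=l}q_{(i)}^{-mn}f_{im}\otimes f_{in}$ together with the fact that $\delta$ is an algebra homomorphism for the twisted product. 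The factor $(x_{(1)},y')_L$ has a strictly shorter second argument, so this recursion on the length of $y$ terminates once the pairing of a monomial against a single generator $f_{il}$ has been specified. That last pairing I define by the mirror recursion, expanding $\delta(f_{il})$ and peeling the other monomial apart from the left; because every summand $f_{im}\otimes f_{in}$ of $\delta(f_{il})$ has both tensor factors of height $<l$ as soon as the monomial has length $\ge 2$, this also terminates. Properties (a)--(c) then hold by construction, and I would check (d) for arbitrary $y,z$ (not only when $y$ is a single generator) using coassociativity of $\delta$ and the identity $\delta(yz)=\delta(y)\delta(z)$ in the twisted tensor algebra.

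Finally, and this is where the genuine work lies, I would prove symmetry $(x,y)_L=(y,x)_L$ by induction on $\operatorname{ht}|x|=\operatorname{ht}|y|$. The clean way to organize this is to introduce the left and right twisted derivations $e'_{il},\,e''_{il}\colon\mathscr F\to\mathscr F$ obtained by reading off the two tensor slots of $\delta$, to show that multiplication by $f_{il}$ on one side is adjoint for $(\ ,\ )_L$ to the corresponding derivation on the other side, and to deduce symmetry from this adjunction together with the inductive hypothesis. The crucial structural inputs are that each $\delta(f_{il})=\sum_{m+n=l}q_{(i)}^{-mn}f_{im}\otimes f_{in}$ is invariant under the flip $a\otimes b\mapsto b\otimes a$ of its tensor factors, since $q_{(i)}^{-mn}=q_{(i)}^{-nm}$, and that the twist exponent $-(|x_2|,|y_1|)$ is governed by the symmetric $W$-invariant form $(\ ,\ )$ on $\h^*$. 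I expect the main obstacle to be exactly the matching of these two independent sources of powers of $q$, that is, showing that peeling a generator off the right-hand argument and peeling the same generator off the left-hand argument produce the same scalar. Once that adjunction is in place the symmetry follows by a routine induction, and with (a)--(d) already established the proposition is proved.
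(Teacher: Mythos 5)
You should first be aware that the paper contains no proof of this proposition: it is imported by citation from \cite{Bozec2014b,Bozec2014c}, where, following Lusztig \cite{Lusztig}, the form is produced from an algebra homomorphism of the free algebra $\mathscr F$ into its graded dual equipped with the multiplication transpose to $\delta$, freeness of $\mathscr F$ supplying existence; that packaging makes multiplicativity automatic, which your recursion must instead check by hand. With that said, your construction is the same mathematics unfolded explicitly, and its architecture is sound. Most importantly, you have correctly isolated the feature that distinguishes this setting from the Kac--Moody case: for $i\in I^{\text{im}}$ the graded piece $\mathscr F_{-l\alpha_i}$ contains, besides $f_{il}$, monomials of length $\ge 2$, so condition (d) alone does \emph{not} determine the form, and your ``mirror recursion'' for the pairings $(w,f_{il})_L$ is genuinely needed --- a naive transcription of Lusztig's uniqueness-plus-existence argument would miss this point entirely. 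The only soft spot is the one you flag yourself: you defer the symmetry verification, saying you ``expect'' the two ways of peeling to produce the same scalar. They do, and you could have closed this on the spot: if $w=f_{im_1}\cdots f_{im_r}$ has degree $-l\alpha_i$ (linear independence of $\Pi$ forces every letter of $w$ to carry the index $i$), then computing $(w,f_{il})_L$ by your mirror rule and computing $(f_{il},w)_L$ by the (d)-recursion both yield
$q_{(i)}^{-\sum_{j<k}m_jm_k}\,\nu_{im_1}\cdots\nu_{im_r}$,
so the two functionals $(\,\cdot\,,f_{il})_L$ and $(f_{il},\,\cdot\,)_L$ coincide; granting this, your height induction for symmetry closes, using coassociativity of $\delta$ for condition (d) and the homomorphism property $\delta(yz)=\delta(y)\delta(z)$ (whose twist exponents are controlled by the symmetric form on $\h^{*}$) for the transposed identity $(xy,z)_L=(x\otimes y,\delta(z))_L$. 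So treat your proposal as a correct plan whose decisive step --- the scalar matching and the symmetry it yields --- must actually be written out rather than anticipated, since that step is the entire content of the proposition beyond formal bookkeeping.
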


From now on, we will assume that
\begin{equation} \label{eq:assumption}
\nu_{il} \in 1+q\Z_{\geq0}[[q]]\ \ \text{for all} \ (i,l)\in I^{\infty}.
\end{equation}
Under the assumption \eqref{eq:assumption}, the radical of the bilinear form $( \ , \ )_L$ is generated by
\begin{equation}\label{re}
\begin{aligned}
& \sum_{k=0}^{1-la_{ij}}(-1)^k
{\begin{bmatrix} 1-la_{ij} \\ k \end{bmatrix}}_i{f_i}^{{1-la_{ij}}-k}f_{jl}f_i^k=0 \ \ \text{for} \ i\in
I^{\text{re}}, \, i \neq (j,l), \vspace{5pt} \\
& f_{ik}f_{jl}-f_{jl}f_{ik} =0 \ \ \text{for} \ a_{ij}=0.
\end{aligned}
\end{equation}

\vskip 2mm

Let $U^{\leq0}$ be the associative algebra over $\Q(q)$ with $\mathbf 1$ generated by $f_{il}$ $((i,l) \in I^{\infty})$ and $q^h$ $(h\in P^{\vee})$ with the defining relations \eqref{re} and
\begin{equation}
\begin{aligned}
& q^0=\mathbf 1,\quad q^hq^{h'}=q^{h+h'} \ \ \text{for} \ h,h' \in P^{\vee}, \\
& q^h f_{jl}q^{-h} = q^{-l \langle h, \alpha_j \rangle} f_{jl}\ \ \text{for} \ h \in P^{\vee}, (j,l)\in I^{\infty}. \\
\end{aligned}
\end{equation}

The comultiplication $\delta$ induces a well-defined comultiplication
$\Delta\colon U^{\leq0}\rightarrow U^{\leq0}\otimes U^{\leq0}$ given by
\begin{equation} \label{eq:comult}
\begin{aligned}
& \Delta(q^h) = q^h \otimes q^h, \\
& \Delta(f_{il}) = \sum_{m+n=l} q_{(i)}^{-mn}f_{im}K_{i}^{n}\otimes f_{in} .
\end{aligned}
\end{equation}

\vskip 2mm

By the Drinfeld double process, we have the following definition for quantum Borcherds-Bozec algebra.
\begin{definition}
The {\it quantum Borcherds-Bozec algebra} $U_q(\g)$ associated with a Borcherds-Cartan datum $(A, P, \Pi, P^{\vee}, \Pi^{\vee})$
is the associative algebra over $\Q(q)$ with $\mathbf 1$
generated by the elements $q^h$ $(h\in P^{\vee})$ and $e_{il},
f_{il}$ $((i,l) \in I^{\infty})$ with the following defining relations:
\begin{equation} \label{eq:rels}
\begin{aligned}
& q^0=\mathbf 1,\quad q^hq^{h'}=q^{h+h'} \ \ \text{for} \ h,h' \in P^{\vee}, \\
& q^h e_{jl}q^{-h} = q^{l \langle h, \alpha_j \rangle} e_{jl}, \ \
q^h f_{jl}q^{-h} = q^{-l \langle h, \alpha_j \rangle} f_{jl}\ \ \text{for} \ h \in P^{\vee}, (j,l)\in I^{\infty}, \\
& e_{ik}f_{jl}-f_{jl}e_{ik}=0 \ \ \text{for} \ i\neq j, \\
& \sum_{\substack{m+n=k \\ n+s=l}}q_{(i)}^{n(m-s)}\nu_{in}e_{is}f_{im}K_i^{-n}=\sum_{\substack{m+n=k \\ n+s=l}}q_{(i)}^{-n(m-s)}\nu_{in}f_{im}e_{is}K_i^{n}\ \ \text{for} \ (i,l),(i,k)\in I^{\infty},\\
& \sum_{k=0}^{1-la_{ij}}(-1)^k
{\begin{bmatrix} 1-la_{ij} \\ k \end{bmatrix}}_i{e_i}^{{1-la_{ij}}-k}e_{jl}e_i^k=0 \ \ \text{for} \ i\in
I^{\text{re}}, \, i \neq (j,l), \\
& \sum_{k=0}^{1-la_{ij}}(-1)^k
{\begin{bmatrix} 1-la_{ij} \\ k \end{bmatrix}}_i{f_i}^{{1-la_{ij}}-k}f_{jl}f_i^k=0 \ \ \text{for} \ i\in
I^{\text{re}}, \, i \neq (j,l), \\
& e_{ik}e_{jl}-e_{jl}e_{ik} = f_{ik}f_{jl}-f_{jl}f_{ik} =0 \ \ \text{for} \ a_{ij}=0.
\end{aligned}
\end{equation}
Here $K_i=q_{i}^{h_i}$ $(i \in I)$. We extend the grading by setting $|q^h|=0$ and $|e_{il}|= l \alpha_{i}$.
\vskip 3mm
\end{definition}

\begin{remark}
Our presentation of $U_q(\g)$ is simpler than the ones in \cite{Bozec2014b, Bozec2014c, BSV2016} thanks to the explicit commutation relations among the generators given in \cite[Appendix A]{FKKT}.
\end{remark}

\vskip 2mm

The comultiplication on $U^{\le 0}$ can be extended to  the comultiplication
$\Delta\colon U_q(\g)\rightarrow U_q(\g)\otimes U_q(\g)$ given by
\begin{equation} \label{eq:comult}
\begin{aligned}
& \Delta(q^h) = q^h \otimes q^h, \\
& \Delta(e_{il}) = \sum_{m+n=l} q_{(i)}^{mn}e_{im}\otimes K_{i}^{-m}e_{in}, \\
& \Delta(f_{il}) = \sum_{m+n=l} q_{(i)}^{-mn}f_{im}K_{i}^{n}\otimes f_{in}.
\end{aligned}
\end{equation}

\vskip 2mm

Let $U^+$ (resp. $U^-$) be the subalgebra of $U_q(\g)$ generated by $e_{il}$ (resp. $f_{il}$) for $(i,l)\in I^{\infty}$,
and $U^{0}$ the subalgebra of $U_q(\g)$ generated by $q^h$ for $h\in P^{\vee}$.
Then the quantum Borcherds-Bozec algebra $U_q(\g)$ has the following {\it triangular decomposition}
$$U_q{(\g)}\cong U^-\otimes U^0 \otimes U^+.$$

Let $\omega\colon U_q(\g)\rightarrow U_q(\g)$  be the involution defined by
$$\omega(q^h)=q^{-h},\ \omega(e_{il})=f_{il},\ \omega(f_{il})=e_{il}\ \ \text{for}\ h \in P^{\vee},\ (i,l)\in I^{\infty},$$
and define a $\Q$-algebra involution on $U_q(\g)$, called the {\it bar involution},  by
$$\overline{e}_{il}=e_{il},\ \overline{f}_{il}=f_{il},\ \overline{q}^h=q^{-h},\ \overline{q}=q^{-1}\ \ \text{for} \ h\in P^{\vee},\ (i,l)\in I^{\infty}.$$

\vskip 2mm

The following proposition provides a set of {\it primitive generators} for $U_q(\g)$.

\vskip 2mm

\begin{proposition}\cite{Bozec2014b,Bozec2014c}\label{prim}
{\rm For any $i\in I^{\text {im}}$ and $l\geq 1$, there exist unique elements $t_{il}\in  U^-_{-l \alpha_{i}}$ and $s_{il}=\omega (t_{il})$ such that
\begin{itemize}\label{bozec}
\item[(1)] $\Q (q) \left<f_{il} \mid l\geq 1\right>=\Q (q) \left<t_{il} \mid l\geq 1\right>$ and $\Q (q) \left<e_{il} \mid l\geq 1\right>=\Q (q) \left<s_{il} \mid l\geq 1\right>$,
\item[(2)] $(t_{il},z)_L=0$ for all $z\in \Q (q) \left<f_{i1} ,\cdots,f_{i,l-1}\right>$,\\
$(s_{il},z)_L=0$ for all $z\in \Q (q) \left<e_{i1} ,\cdots,e_{i,l-1}\right>$,
\item[(3)] $t_{il}-f_{il}\in \Q (q) \left<f_{ik} \mid k<l \right>$ and $s_{il}-e_{il}\in \Q (q) \left<e_{ik} \mid k<l \right>$,
\item[(4)] $\overline{t}_{il}=t_{il},\ \ \overline{s}_{il}=s_{il}$,
\item[(5)] $\delta(t_{il})=t_{il}\otimes 1+1\otimes t_{il}, \ \delta(s_{il})=s_{il}\otimes 1+ 1\otimes s_{il}$,
\item[(6)] $\Delta(t_{il})=t_{il}\otimes 1+K_i^l\otimes t_{il}, \ \Delta(s_{il})=s_{il}\otimes K_i^{-l}+ 1\otimes s_{il}$,
\end{itemize}}
\end{proposition}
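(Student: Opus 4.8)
The plan is to fix an imaginary index $i\in I^{\mathrm{im}}$ and construct $t_{il}$ by an orthogonalization (Gram--Schmidt) procedure inside the graded subalgebra generated by the $f_{ik}$, carrying out the construction by induction on $l$. Write $U^-_i=\Q(q)\langle f_{ik}\mid k\ge 1\rangle$ and let $U^-_{i,(l)}=U^-_i\cap U^-_{-l\alpha_i}$ be its degree $-l\alpha_i$ component; this is finite dimensional, spanned by the monomials $f_{il_1}\cdots f_{il_r}$ with $l_1+\cdots+l_r=l$. Let $D_{(l)}\subseteq U^-_{i,(l)}$ be the span of those monomials having at least two factors, that is, the degree $-l\alpha_i$ part of $\Q(q)\langle f_{i1},\ldots,f_{i,l-1}\rangle$, so that $U^-_{i,(l)}=\Q(q)\,f_{il}\oplus D_{(l)}$. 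The whole point is that $D_{(l)}$ is exactly the decomposable (product) part in this degree, so conditions (2) and (3) say precisely that $t_{il}$ is the component of $f_{il}$ lying in the orthogonal complement of the decomposables.

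First I would record that, under the assumption \eqref{eq:assumption}, the symmetric form $(\ ,\ )_L$ descends to a \emph{non-degenerate} form on $U^-$, and in particular its restriction to each $U^-_{i,(l)}$, hence to $D_{(l)}$, is non-degenerate; this is where $\nu_{il}\in 1+q\Z_{\ge 0}[[q]]$ is used, guaranteeing that the relevant Gram matrices are invertible. Granting this, there is a unique orthogonal projection $\pi_{(l)}\colon U^-_{i,(l)}\to D_{(l)}$ with respect to $(\ ,\ )_L$, and I set
$$t_{il}:=f_{il}-\pi_{(l)}(f_{il}).$$
By construction $t_{il}-f_{il}\in D_{(l)}$, giving (3), and $(t_{il},z)_L=0$ for all $z\in D_{(l)}$, giving (2); non-degeneracy on $D_{(l)}$ makes $\pi_{(l)}(f_{il})$, hence $t_{il}$, unique. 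I then put $s_{il}:=\omega(t_{il})$, so that (2) and (3) for $s_{il}$ follow by applying the involution $\omega$ together with the corresponding statements for the $e$-side.

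Next I would deduce the remaining properties. Property (1) is immediate from (3): since $t_{ik}-f_{ik}\in\Q(q)\langle f_{ij}\mid j<k\rangle$ for every $k\le l$, the transition between $\{f_{ik}\}$ and $\{t_{ik}\}$ is unitriangular, so the two families generate the same subalgebra, and applying $\omega$ gives the statement for $e_{ik}$ and $s_{ik}$. For (5) I would use the adjunction (d), namely $(x,yz)_L=(\delta(x),y\otimes z)_L$. Writing $\delta(t_{il})=\sum_{m+n=l}(\delta t_{il})_{m,n}$ with $(\delta t_{il})_{m,n}$ of bidegree $(-m\alpha_i,-n\alpha_i)$, the counit forces $(\delta t_{il})_{l,0}=t_{il}\otimes 1$ and $(\delta t_{il})_{0,l}=1\otimes t_{il}$; for $m,n\ge 1$ and any $y\in U^-_{i,(m)}$, $z\in U^-_{i,(n)}$ one has $(\,(\delta t_{il})_{m,n},\,y\otimes z)_L=(t_{il},yz)_L=0$ by (2), since $yz\in D_{(l)}$. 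Non-degeneracy of the induced form on $U^-_{i,(m)}\otimes U^-_{i,(n)}$ then forces every middle term to vanish, which is exactly (5); this also shows $P_{(l)}:=\{x\mid \delta(x)=x\otimes 1+1\otimes x\}$ coincides with $D_{(l)}^{\perp}$ inside $U^-_{i,(l)}$. Property (6) follows from (5) by reinstating the $K_i$ factors that distinguish $\Delta$ from $\delta$ in \eqref{eq:comult}: the $(0,l)$-component acquires the factor $K_i^l$ while the $(l,0)$-component keeps $K_i^0=1$. The identities for $s_{il}$ come from applying $\omega$, which intertwines $\Delta$ with its twist and accounts for the asymmetric $K_i^{-l}$ appearing in (6).

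The hard part will be the bar-invariance (4). The naive attempt, transporting bar-invariance of $f_{il}$ through the orthogonal projection, fails, since $(\ ,\ )_L$ is not bar-symmetric (one has $\overline{\nu}_{il}\ne\nu_{il}$ in general under \eqref{eq:assumption}). I would instead argue through the primitivity characterization (5), whose structure constants are independent of $\nu$: indeed $t_{il}$ is the unique element of $f_{il}+D_{(l)}$ lying in $P_{(l)}$, so it suffices to show $P_{(l)}$ is stable under the bar involution; then $\overline{t}_{il}\in f_{il}+D_{(l)}$ is again primitive, and uniqueness forces $\overline{t}_{il}=t_{il}$. Bar-stability of $P_{(l)}$ is the genuinely delicate point: although $\delta$ is not compatible with the untwisted involution $\mathrm{bar}\otimes\mathrm{bar}$ on $U^-_i\otimes U^-_i$ (the exponents $q_{(i)}^{-mn}$ get inverted), primitive elements carry no middle terms, and one checks that the discrepancy is absorbed by the twist in the multiplication on $U^-_i\otimes U^-_i$; equivalently, the coefficients produced by the projection are rational functions of $q_{(i)}$ invariant under $q_{(i)}\mapsto q_{(i)}^{-1}$. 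For isotropic $i$ this is automatic, since $q_{(i)}=1$ and the whole setup is the untwisted cocommutative one; for $a_{ii}<0$ I would verify it first by the explicit low-degree computation $t_{i2}=f_{i2}-\tfrac{q_{(i)}}{q_{(i)}^2+1}f_{i1}^2$, which is visibly bar-invariant and in which $\nu$ already cancels, and then by an induction on $l$ exploiting the cocommutativity $\sigma\circ\delta=\delta$ on generators. Establishing this bar-stability cleanly, rather than the orthogonalization (which is routine once non-degeneracy is in hand), is where the real work lies.
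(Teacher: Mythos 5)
The paper never proves this proposition: it is imported verbatim from Bozec \cite{Bozec2014b,Bozec2014c}, so there is no in-paper argument to compare against and your proposal has to stand on its own. Its skeleton is the right one (and is the standard construction): $t_{il}$ is obtained by projecting $f_{il}$ orthogonally off the space $D_{(l)}$ of decomposables, properties (2), (3) hold by construction, (1) follows by unitriangularity, (5) follows from the adjunction $(x,yz)_L=(\delta(x),y\otimes z)_L$ together with non-degeneracy, and (6) follows from (5) by reinserting the $K_i$ factors. However, one step is a non sequitur as written: non-degeneracy of $(\ ,\ )_L$ on the weight space $U^-_{-l\alpha_i}$ does \emph{not} imply non-degeneracy of its restriction to the subspace $D_{(l)}$ (a hyperbolic plane restricted to an isotropic line is the standard counterexample), and the existence and uniqueness of your projection $\pi_{(l)}$ needs precisely the latter. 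This is fixable by the tool you name but never deploy: under \eqref{eq:assumption} every entry of the Gram matrix of the monomial basis of $D_{(l)}$ is regular at $q=0$ (for $a_{ii}<0$ one has $q_{(i)}^{-1}=q^{r_i|a_{ii}|/2}$, a positive power of $q$), and at $q=0$ the Gram matrix specializes to an invertible matrix — the identity in the non-isotropic case, and the positive-definite Hall-pairing Gram matrix of the $h_\lambda$'s in the isotropic case — so its determinant is a nonzero element of $\Q(q)$. Without some such argument the whole construction is not yet grounded.

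The substantive gap is (4), which you yourself concede is not proved, and the route you sketch would fail: $\sigma\circ\delta=\delta$ holds only on the generators $f_{il}$ and is false on products. Indeed $\delta(f_{i1}f_{i2})$ contains $f_{i1}\otimes f_{i2}$ with coefficient $1$ but $f_{i2}\otimes f_{i1}$ with coefficient $q_{(i)}^{-4}$, so for $a_{ii}<0$ there is no cocommutativity for an induction to exploit. The correct mechanism — which your phrase ``absorbed by the twist'' is reaching for, but which must be made explicit — is the following. Define the bar-semilinear map $\Xi$ on homogeneous tensors by $\Xi(x\otimes y)=q^{-(|x|,|y|)}\,\overline{y}\otimes\overline{x}$. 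Using only the symmetry of the bilinear form, $\Xi$ is multiplicative for the twisted product on $\mathscr F\otimes\mathscr F$; moreover $\Xi\circ\delta$ and $\delta\circ\mathrm{bar}$ are both bar-semilinear algebra maps and agree on each generator, since
\[
\Xi\Bigl(\sum_{m+n=l}q_{(i)}^{-mn}f_{im}\otimes f_{in}\Bigr)=\sum_{m+n=l}q_{(i)}^{mn}\,q_{(i)}^{-2mn}f_{in}\otimes f_{im}=\delta(f_{il}),
\]
which is exactly the $m\leftrightarrow n$ symmetry you observed. Hence $\delta(\overline{x})=\Xi(\delta(x))$ for all $x$, and if $x$ is primitive then so is $\overline{x}$, i.e.\ your space $P_{(l)}$ is bar-stable. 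Since bar fixes every monomial $f_{i,\mathbf c}$, it preserves $f_{il}+D_{(l)}$, and your uniqueness statement then forces $\overline{t}_{il}=t_{il}$. With this lemma inserted (and the non-degeneracy point above repaired), your proof is complete; as submitted, property (4) is unproven and the fallback argument for it is unsound.
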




\vskip 2mm

Set $\tau_{il}=(t_{il},t_{il})_L=(s_{il},s_{il})_L$. We have the following commutation relations in $U_q(\g)$
\begin{equation}\label{news}
s_{il} \, t_{jk}-t_{jk} \,s_{il}=\delta_{ij}\delta_{lk}\tau_{il}(K_i^l-K_i^{-l}).
\end{equation}
For $i\in I^{\text{im}} \backslash I^{\text{iso}}$ (resp. $i\in I^{\text{iso}}$), denote by $\mathcal C_{i,l}$ the set of compositions (resp. partitions) of $l$, and set $\mathcal C_i=\bigsqcup_{l\geq 0}\mathcal C_{i,l}$. For $i\in I^{\text{re}}$, we put $\mathcal C_{i,l}=\{ (l) \}$.

\vskip 2mm

If $i\in I^{\text {im}}$ and $\mathbf c =(c_1,\cdots,c_r)\in \mathcal C_{i,l}$, we write $|\mathbf c|=l$ and define
  $$t_{i,\mathbf c}=t_{ic_1}\cdots t_{ic_r},\quad  s_{i,\mathbf c}=s_{ic_1}\cdots s_{ic_r}\ \ {\rm and}\ \ \tau_{i,\mathbf c}=\tau_{ic_1}\cdots \tau_{ic_r}.$$
  Note that $\{ t_{i,\mathbf c} \mid \mathbf c \in \mathcal C_{i,l}\}$
 forms a basis of  $U^-_{-l \alpha_{i}}$.

\vskip 2mm

For each $(i, l) \in I^{\infty}$, we define the linear map $e_{il}'\colon U^{-} \rightarrow U^{-}$ by
\begin{equation}\label{delta}
e_{il}'(1)=0, \ e_{il}'(t_{jk})=\delta_{ij}\delta_{lk}\ \text{and}\ e_{il}'(xy)=e_{il}'(x)y+q^{l(|x|,\alpha_i)}xe_{il}'(y).
\end{equation}
In \cite{Bozec2014c}, Bozec used the operators $e_{il}'$ to define the {\it Kashiwara operators} on $U^{-}$.


\vspace{12pt}

\section{Abstract crystals}

\vskip 2mm

In this section, we develop the theory of abstract crystals for quantum Borcherds-Bozec algebras. We would like to point out that we follow the outline given in \cite{JKKS2007}, not in \cite{JKK2005}.

\begin{definition}\label{ac}
Let $U_q(\g)$ be the quantum Borcherds-Bozec algebra associated with a given Borcherds-Cartan datum $(A, P, P^{\vee}, \Pi, \Pi^{\vee})$.
An {\it abstract $U_q(\g)$-crystal} or simply a {\it crystal} is a set $B$ together with the maps ${\rm wt}\colon  B \rightarrow P$, $\widetilde{e}_{il},\widetilde{f}_{il}\colon B\rightarrow  B\sqcup \{0\}$ $((i,l)\in I^{\infty})$ and $\epsilon_i,\phi_i\colon B\rightarrow \Z\sqcup \{-\infty\}$ $(i\in I)$ satisfying the following conditions:
\begin{itemize}
\item [(i)] $\text{wt}(\widetilde{e}_{i,l}b)=\text{wt}(b)+l\alpha_i$ if $\widetilde{e}_{il}b\neq 0$.
\item [(ii)] $\text{wt}(\widetilde{f}_{i,l}b)=\text{wt}(b)-l\alpha_i$ if $\widetilde{f}_{il}b\neq 0$.
\item [(iii)] For any $i\in I$ and $b\in  B$, $\phi_i(b)=\epsilon_i(b)+ \langle h_i, \text{wt}(b) \rangle$.
\item [(iv)] For any $(i,l)\in I^{\infty}$ and $b,b'\in  B$, $\widetilde{f}_{il}b=b'$ if and only if $b=\widetilde{e}_{il}b'$.
\item [(v)] For any $i\in I^{\text{re}}$ and $b\in  B$, we have
\begin{itemize}
\item [(a)] $\epsilon_i(\widetilde{e}_ib)=\epsilon_i(b)-1$, $\phi_i(\widetilde{e}_ib)=\phi_i(b)+1$ if $\widetilde{e}_ib\neq 0$,
\item [(b)] $\epsilon_i(\widetilde{f}_ib)=\epsilon_i(b)+1$, $\phi_i(\widetilde{f}_ib)=\phi_i(b)-1$ if $\widetilde{f}_ib\neq 0$,
\end{itemize}
where $\widetilde{e}_i=\widetilde{e}_{i1}$ and $\widetilde{f}_i=\widetilde{f}_{i1}$.
\item [(vi)] For any $i\in I^{\text{im}}$, $l>0$ and $b\in B$, we have
\begin{itemize}
\item [(a)] $\epsilon_i(\widetilde{e}_{il}b)=\epsilon_i(b)$, $\phi_i(\widetilde{e}_{il}b)=\phi_i(b)+la_{ii}$ if $\widetilde{e}_{il}b\neq 0$,
\item [(b)] $\epsilon_i(\widetilde{f}_{il}b)=\epsilon_i(b)$, $\phi_i(\widetilde{f}_{il}b)=\phi_i(b)-la_{ii}$ if $\widetilde{f}_{il}b\neq 0$.
\end{itemize}
\item [(vii)] For any $(i,l)\in I^{\infty}$ and $b\in B$ such that $\phi_i(b)=-\infty$, we have $\widetilde{e}_{il}b=\widetilde{f}_{il}b=0$.
\end{itemize}
\end{definition}

\vskip 2mm

Let $B$ be an abstract crystal. For $b, b' \in B$ and $(i, l) \in I^{\infty}$, by the condition (iv), we have $\widetilde{f}_{il}  b = b'$ if and only if $\widetilde{e}_{il} b' = b$. In this case,we draw a labelled arrow $b \overset{(i,l)} \longrightarrow b'$. The directed graph thus obtained is called the {\it crystal graph} of $B$.

\vskip 2mm


\begin{definition}\label{mor}

Let $B_1$ and $B_2$ be abstract crystals. A map $\psi\colon B_1 \rightarrow B_2$ is called a {\it morphism of crystals} or a {\it crystal morphism} if it satisfies the following conditions:
\begin{itemize}
\item [(i)] for $b\in B_1$ and $i\in I$, we have\\
$\text{wt}(\psi(b))=\text{wt}(b)$,  $\epsilon_i(\psi(b))=\epsilon_i(b)$, $\phi_i(\psi(b))=\phi_i(b)$,
\item [(ii)] for all $b\in B_1$,  $(i,l)\in I^{\infty}$, if $\widetilde{f}_{il}b\in B_1$, then $\psi(\widetilde{f}_{il}b)=\widetilde{f}_{il}\psi(b)$.
\end{itemize}
\end{definition}

\begin{remark} \label{rem:iso} \hfill

(a) If $b\in B_1$ and $\widetilde{e}_{il}b\in B_1$, one can deduce $\psi(\widetilde{e}_{il}b)=\widetilde{e}_{il}\psi(b)$.

(b) If a crystal morphism $\psi$ is a bijection, then $\psi^{-1}$ is also a crystal morphism. In particular, both $\psi$ and $\psi^{-1}$ commute with $\widetilde{e}_{il}$'s. 

\end{remark}


\begin{definition} \label{def:morphism}

Let $\psi\colon B_1\rightarrow B_2$ be a morphism of crystals.
\begin{itemize}
\item [(a)] $\psi$ is called a {\it strict morphism} if
$$\psi(\widetilde{e}_{il} b) = \widetilde{e}_{il}(\psi(b)), \ \  \psi(\widetilde{f}_{il} b) = \widetilde{f}_{il}(\psi(b))$$
for all $b \in B_1$, $(i,l)\in I^{\infty}$. Here, we understand $\psi(0)=0$.

\item [(b)] $\psi$ is called a {\it crystal embedding} if the underlying map $\psi\colon B_1\rightarrow B_2$ is injective.
In this case, $B_1$ is called a {\it subcrystal} of $B_2$.
If $\psi$ is a strict embedding, $B_1$ is called  a {\it full subcrystal} of $B_2$.

\item[(c)] $\psi$ is called an {\it isomorphism} if it is a bijection.

\end{itemize}
\end{definition}

\vskip 2mm

In the following two examples, we recall the crystals $B(\infty)$ and $B(\lambda)$ $(\lambda \in P^{+})$ constructed in \cite{Bozec2014c}.

\begin{example} \label{ex:Binfty} \hfill

{\rm  Let $u \in U^{-}$ and $(i,l) \in I^{\infty}$.

\vskip 2mm

If $i \in I^{\text{re}}$, in \cite{Kas91}, Kashiwara proved that $u$ can be uniquely written as
$$u = \sum_{k \ge 0} f_{i}^{(k)} u_k,$$
where $f_{i}^{(k)} = f_{i}^k / [k]_{i}!$ and $e_{i}' u_{k}=0$ for all $k\ge 0$.
In this case, the {\it Kashiwara operators} are defined by
$$\widetilde{e_i} \, u = \sum_{k\ge 1} f_{i}^{(k-1)}\, u_k, \ \  \widetilde{f_i} \, u = \sum_{k\ge 0} f_{i}^{(k+1)}\, u_k.$$

\vskip 2mm

If $i \in I^{\text{im}} \setminus I^{\text{iso}}$, in \cite{Bozec2014c}, Bozec showed that $u$ can be written uniquely as
$$u = \sum_{\mathbf{c} \in \mathcal{C}_{i}} t_{i, \mathbf{c}} u_{\mathbf{c}},$$
where $\mathbf{c} = (c_1, \ldots, c_r)$ is a composition in $\mathcal{C}_{i}$ and $e_{il}'\, u_{\mathbf{c}} = 0$ for all $l\ge 1$. In this case, the Kashiwara operators are defined by
\begin{equation*}
\widetilde{e}_{il} \, u = \sum_{\mathbf{c} : \, c_1 = l} \, t_{i, \mathbf{c} \setminus c_1}\, u_{\mathbf{c}},
\ \ \widetilde{f}_{il} \, u = \sum_{\mathbf{c}\in\mathcal C_i} \, t_{i, (l, \mathbf{c})}\, u_{\mathbf{c}}.
\end{equation*}

\vskip 2mm

If $i \in I^{\text{iso}}$, $u$ can be written uniquely as
$$u = \sum_{\mathbf{c} \in \mathcal{C}_{i}} t_{i, \mathbf{c}} u_{\mathbf{c}},$$
where $\mathbf{c} = (c_1, \ldots, c_r)$ is a partition in $\mathcal{C}_{i}$ and $e_{il}'\, u_{\mathbf{c}} = 0$ for all $l\ge 1$. In this case, the Kashiwara operators are defined by
\begin{equation*}
\widetilde{e}_{il} \, u = \sum_{\mathbf{c} \in \mathcal{C}_{i}} \,\sqrt{\frac{m_{l}(\mathbf{c})}{l}} t_{i, \mathbf{c} \setminus l}\, u_{\mathbf{c}},
\ \ \ \widetilde{f}_{il} \, u = \sum_{\mathbf{c} \in \mathcal{C}_{i}} \,\sqrt{\frac{l}{m_{l}(\mathbf{c})+1}}  t_{i, {\mathbf{c} \cup l}}\, u_{\mathbf{c}},
\end{equation*}
where $m_{l}(\mathbf{c}) = \# \{k \mid c_k = l\}$, and $\mathbf{c} \cup l$ stands for the partition $(l,c_1,\cdots,c_r)$.

\vskip 2mm
\begin{remark}
Note that the square roots appear in the above definition. So we need to consider an extension $\mathbb{F}$ of $\Q$ that contains all the necessary square roots in Example \ref{ex:Binfty} and in Example \ref{ex:Blambda}. (See \cite[Remark 3.12]{Bozec2014c}.)
\end{remark}

\vskip 2mm

%

Let $\mathbb A_0=\{f\in \mathbb{F}(q) \mid f \ \text{is regular at} \ q=0\}$ and let
$L(\infty)$ be the $\mathbb A_0$-submodule of $U^-$ generated by the elements of the form
$\widetilde{f}_{i_1,l_1}\cdots\widetilde{f}_{i_r,l_r}  \mathbf{1}$
for  $r\geq 0$ and $(i_k,l_k)\in I^{\infty}$. Set
$${B}(\infty)=\{\widetilde{f}_{i_1,l_1}\cdots\widetilde{f}_{i_r,l_r} \mathbf{1} \mod{q L(\infty)} \mid r \ge 0, (i_k, l_k) \in I^{\infty}  \}\subseteq
{L}(\infty)/ q{L}(\infty).$$

For $b=\widetilde{f}_{i_1,l_1}\cdots\widetilde{f}_{i_r,l_r} \mathbf{1}$, we define
\begin{equation}
\begin{aligned}
&\text{wt}(b)=-(l_1\alpha_{i_1}+\cdots+l_r\alpha_{i_r}), \\
&\epsilon_i(b)=
\begin{cases}
\text{max}\{k\geq0\mid \widetilde{e}_i^kb\neq 0\}  &\text{for} \ i\in I^{\text{re}},\\
 0 & \text{for} \ i\in I^{\text{im}},
\end{cases}\\
&\phi_i(b)=\epsilon_i(b)+ \langle h_i, \text{wt}(b) \rangle \quad \text{for any} \ \  i\in I.
\end{aligned}
\end{equation}
Then $B(\infty)$ becomes an abstract crystal.

\vskip 2mm

Note that $\mathbf{1}$ is the only element in $B(\infty)$ of weight $0$ and is annihilated by all $\widetilde{e}_{il}$ for
$(i,l) \in I^{\infty}$. Moreover, $B(\infty)$ is connected (as a directed graph).

}
\end{example}

\vskip 2mm

\begin{example} \label{ex:Blambda}

{\rm For a dominant integral weight $\lambda \in P^{+}$, let $V(\lambda) = U_{q}(\g)  v_{\lambda}$ be the irreducible highest weight module with highest weight $\lambda$ and highest weight vector $v_{\lambda}$.

\vskip 2mm

Let $v \in V(\lambda)$.
If $i \in I^{\text{re}}$,  $v$ can be uniquely written as
$$v = \sum_{k \ge 0} f_{i}^{(k)} v_k,$$
where $f_{i}^{(k)} = f_{i}^k / [k]_{i}!$ and $e_{i} v_{k}=0$ for all $k\ge 0$, and if $i \in I^{\text{im}}$,
$v$ can be written uniquely as
$$v = \sum_{\mathbf{c} \in \mathcal{C}_{i}} t_{i, \mathbf{c}} v_{\mathbf{c}},$$
where $\mathbf{c} = (c_1, \ldots, c_r)$ is a composition or a partition  in $\mathcal{C}_{i}$ and $e_{il}\, v_{\mathbf{c}} = 0$ for all $l\ge 1$. The Kashiwara operators are defined in a similar manner as in Example \ref{ex:Binfty}.

\vskip 2mm

Let $L(\lambda)$ be the $\mathbb A_0$-submodule of $V(\lambda)$ generated by the elements of the form
$\widetilde{f}_{i_1,l_1}\cdots\widetilde{f}_{i_r,l_r} v_{\lambda}$
for  $r\geq 0$ and $(i_k,l_k)\in I^{\infty}$. Set
$${B}(\lambda)=\{\widetilde{f}_{i_1,l_1}\cdots\widetilde{f}_{i_r,l_r} v_{\lambda}  \mod{q L(\lambda)} \mid r \ge 0, (i_k, l_k) \in I^{\infty}  \} \setminus \{0\} \subseteq
{L}(\lambda)/ q{L}(\lambda).$$

For $b=\widetilde{f}_{i_1,l_1}\cdots\widetilde{f}_{i_r,l_r} v_{\lambda}$, we define
\begin{equation}
\begin{aligned}
&\text{wt}(b)=\lambda - (l_1\alpha_{i_1}+\cdots+l_r\alpha_{i_r}), \\
&\epsilon_i(b)=
\begin{cases}
\text{max}\{k\geq0\mid \widetilde{e}_i^kb\neq 0\}  & \text{for} \ i\in I^{\text{re}},\\
 0 & \text{for} \ i\in I^{\text{im}},
\end{cases}\\
&\phi_i(b)=\epsilon_i(b)+ \langle h_i, \text{wt}(b) \rangle \quad \text{for any} \ \  i\in I.
\end{aligned}
\end{equation}
Then $B(\lambda)$ becomes an abstract crystal. Note that $v_{\lambda}$ is the only element in $B(\lambda)$ of weight $\lambda$ and is annihilated by all $\widetilde{e}_{il}$ for $(i,l) \in I^{\infty}$. Moreover, $B(\lambda)$ is connected.

}
\end{example}

\vskip 2mm

\begin{example} \label{ex:Tlambda} \hfill

\vskip 2mm

{\rm

(a) For $\lambda \in P$, set $T_{\lambda} = \{ t_{\lambda} \}$ and define
\begin{equation*}
\text{wt} (t_{\lambda}) = \lambda, \ \ \epsilon_{i}(t_{\lambda}) = \phi_{i}(t_{\lambda}) = - \infty,
\ \  \widetilde{e}_{il} (t_{\lambda}) = \widetilde{f}_{il} (t_{\lambda}) =0
\end{equation*}
for all $i \in I$ and $l \ge 1$. Then $T_{\lambda}$ is an abstract crystal.

\vskip 2mm

(b) Let $C=\{c\}$ and define
\begin{equation*}
\text{wt} (c) = 0, \ \ \epsilon_{i}(c) = \phi_{i}(c) = 0,
\ \  \widetilde{e}_{il} (c) = \widetilde{f}_{il} (c) =0
\end{equation*}
for all $i \in I$ and $l \ge 1$. Then $C$ is an abstract crystal which is isomorphic to $B(0)$.

}
\end{example}

\vskip 2mm

As was proved in \cite{JKK2005}, we have the following proposition which describes the relation between the crystals $B(\infty)$ and $B(\lambda)$.

\vskip 2mm

\begin{proposition}\label{P1}
{\rm For every $\lambda \in P^+$, there exists an injective map $\pi_\lambda\colon {B} (\lambda)\rightarrow{B}(\infty)$ such that
\begin{itemize}
\item [(i)] $\pi_\lambda(v_\lambda)=1$,
\item [(ii)] for all $(i,l)\in I^{\infty}$ and $b\in {B}(\lambda)$ with $\widetilde{f}_{il}(b)\neq 0$, $\pi_\lambda \, \widetilde{f}_{il}(b)=\widetilde{f}_{il}\, \pi_\lambda(b),$
\item [(iii)] for all $(i,l)\in I^{\infty}$ and $b\in {B}(\lambda)$,
$\pi_\lambda\, \widetilde{e}_{il}(b)=\widetilde{e}_{il}\, \pi_\lambda(b),$
\item [(iv)] for all $i\in I$ and $b\in {B}(\lambda)$,
$\text{wt}(\pi_\lambda(b))=\text{wt}(b)-\lambda, \ \  \epsilon_i(\pi_\lambda(b))=\epsilon_i(b).$
\end{itemize}
}
\end{proposition}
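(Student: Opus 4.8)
The plan is to realize $\pi_\lambda$ as a partial inverse of the natural projection and to reduce all of (i)--(iv) to a single compatibility statement at $q=0$. First I would introduce the $\Q(q)$-linear surjection $p_\lambda\colon U^-\to V(\lambda)$, $u\mapsto u\,v_\lambda$, which is onto because $V(\lambda)=U^- v_\lambda$ and which shifts degrees by $\lambda$, i.e.\ $p_\lambda({U^-}_{-\beta})\subseteq V(\lambda)_{\lambda-\beta}$. The heart of the matter is the \emph{Key Lemma}: $p_\lambda(L(\infty))=L(\lambda)$, and the induced $\mathbb F$-linear map $\overline p_\lambda\colon L(\infty)/qL(\infty)\to L(\lambda)/qL(\lambda)$ satisfies $\overline p_\lambda(\widetilde f_{il}b)=\widetilde f_{il}\,\overline p_\lambda(b)$ and $\overline p_\lambda(\widetilde e_{il}b)=\widetilde e_{il}\,\overline p_\lambda(b)$ for all $(i,l)\in I^{\infty}$, where the Kashiwara operators annihilate $0$. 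To prove this I would compare the two constructions of the operators: on $U^-$ they come from the decompositions with respect to the twisted derivations $e_{il}'$ of \eqref{delta}, whereas on $V(\lambda)$ they come from the honest action of the $e_{il}$ as in Example \ref{ex:Blambda}. The bridge is an identity expressing $e_{il}(u\,v_\lambda)$ through $(e_{il}'u)\,v_\lambda$, obtained from the commutation relations \eqref{news} in $U_q(\g)$ and the defining property \eqref{delta} together with $e_{il}\,v_\lambda=0$; the weight-dependent scalars it produces are regular at $q=0$ and, because $\lambda$ is dominant, reduce modulo $qL(\lambda)$ to exactly the normalization used to define $\widetilde e_{il},\widetilde f_{il}$ on $V(\lambda)$. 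This forces the $e_{il}'$-decomposition of $u$ and the $e_{il}$-decomposition of $u\,v_\lambda$ to agree at $q=0$, yielding the asserted commutation. I expect this Key Lemma, and in particular the bookkeeping for $i\in I^{\mathrm{im}}$ where one must track the primitive generators $t_{il}$ and the normalizing square roots of Example \ref{ex:Binfty}, to be the main obstacle; the rest is formal.

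Granting the Key Lemma, since $\overline p_\lambda(\mathbf 1)=v_\lambda$ and $\overline p_\lambda$ intertwines the $\widetilde f_{il}$, it maps $B(\infty)$ onto $B(\lambda)\sqcup\{0\}$. I would next show $\overline p_\lambda$ is injective on the nonzero locus $\{b\in B(\infty)\mid\overline p_\lambda(b)\neq0\}$ by induction on $\mathrm{ht}(\lambda-\text{wt})$. The only element of $B(\infty)$ of weight $0$ is $\mathbf 1$, which handles the top weight $\lambda$. If $b_1\neq b_2$ both map to some $c\neq v_\lambda$, pick $(i,l)$ with $\widetilde e_{il}c\neq0$, possible because $B(\lambda)$ is connected with unique source $v_\lambda$; then $\overline p_\lambda(\widetilde e_{il}b_j)=\widetilde e_{il}c\neq0$, so $\widetilde e_{il}b_j\neq0$, the inductive hypothesis gives $\widetilde e_{il}b_1=\widetilde e_{il}b_2$, and axiom (iv) forces $b_1=b_2$, a contradiction. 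Hence $\overline p_\lambda$ restricts to a bijection from the nonzero locus onto $B(\lambda)$.

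Finally I would define $\pi_\lambda\colon B(\lambda)\to B(\infty)$ to be the inverse of this bijection; equivalently $\pi_\lambda(\widetilde f_{i_1,l_1}\cdots\widetilde f_{i_r,l_r}v_\lambda)=\widetilde f_{i_1,l_1}\cdots\widetilde f_{i_r,l_r}\mathbf 1$, which is well defined and injective precisely by the injectivity just established together with $\overline p_\lambda\circ\pi_\lambda=\mathrm{id}$. Property (i) is immediate. For (ii) and (iii) I apply $\overline p_\lambda$ to both sides: when $\widetilde f_{il}b\neq0$ (resp.\ $\widetilde e_{il}b\neq0$) both sides lie in the nonzero locus with the same image, so they coincide; in the remaining case $\widetilde e_{il}b=0$ one rules out $\widetilde e_{il}\pi_\lambda(b)\neq0$ by contradiction, since otherwise axiom (iv) would give $\pi_\lambda(b)=\widetilde f_{il}(\widetilde e_{il}\pi_\lambda(b))$ and hence $\overline p_\lambda(\pi_\lambda(b))=\widetilde f_{il}\,\overline p_\lambda(\widetilde e_{il}\pi_\lambda(b))=\widetilde f_{il}(0)=0$, contradicting $\overline p_\lambda(\pi_\lambda(b))=b\neq0$. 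Property (iv) then follows: the weight identity $\text{wt}(\pi_\lambda(b))=\text{wt}(b)-\lambda$ is the degree shift of $p_\lambda$ read off from $\overline p_\lambda(\pi_\lambda(b))=b$, and $\epsilon_i(\pi_\lambda(b))=\epsilon_i(b)$ holds because for $i\in I^{\mathrm{re}}$ the operator $\widetilde e_i$ commutes with $\pi_\lambda$ by (iii), matching the two maxima, while for $i\in I^{\mathrm{im}}$ both values are $0$ by definition.
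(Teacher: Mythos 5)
First, a point of reference: the paper does not prove this proposition at all; it is quoted from the literature (``As was proved in \cite{JKK2005}\ldots''). Your plan is essentially a reconstruction of that cited proof: realize $\pi_\lambda$ as the section, at $q=0$, of the map $u\mapsto u\,v_\lambda$. The formal superstructure of your argument is sound -- surjectivity of $\overline{p}_\lambda$ onto $B(\lambda)\sqcup\{0\}$, injectivity on the nonzero locus by induction using crystal axiom (iv), and the derivation of (i)--(iv) from the resulting bijection are all correct, and this is indeed how the cited proof is organized. But your Key Lemma is false as stated: the $\widetilde{e}_{il}$-commutation cannot hold for all $b$. Take $i\in I^{\text{re}}$ and $\lambda$ with $\langle h_i,\lambda\rangle=0$, and $b=\widetilde{f}_i\mathbf{1}\in B(\infty)$. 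Then $\overline{p}_\lambda(b)=\widetilde{f}_iv_\lambda=0$ (since $\phi_i(v_\lambda)=0$), so $\widetilde{e}_i\,\overline{p}_\lambda(b)=0$, while $\overline{p}_\lambda(\widetilde{e}_ib)=\overline{p}_\lambda(\mathbf{1})=v_\lambda\neq0$. The correct statement, as in Kashiwara and in \cite{JKK2005}, is: $\widetilde{f}_{il}$-commutation for all $b\in B(\infty)$, but $\widetilde{e}_{il}$-commutation only for those $b$ with $\overline{p}_\lambda(b)\neq0$. Fortunately, every place you actually invoke the $\widetilde{e}$-commutation (on $b_j$ with image $c\neq0$, and on $\pi_\lambda(b)$ with image $b\neq0$) lies on the nonzero locus, so the downstream steps survive this correction.

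The more serious gap is the proof you sketch for the Key Lemma, which is where all the real content of the proposition sits. The claims $p_\lambda(L(\infty))=L(\lambda)$ and the commutation of $\overline{p}_\lambda$ with the Kashiwara operators do not follow from a single ``bridge identity'' relating $e_{il}(u\,v_\lambda)$ to $(e_{il}'u)\,v_\lambda$ plus regularity at $q=0$. In particular, your assertion that this ``forces the $e_{il}'$-decomposition of $u$ and the $e_{il}$-decomposition of $u\,v_\lambda$ to agree at $q=0$'' is not correct as stated: the two decompositions genuinely differ before reduction (for $\lambda=0$ the decomposition of $u\,v_\lambda$ is trivial while that of $u$ is not; more generally, $i$-strings in $V(\lambda)$ are truncated by $\lambda$, and for imaginary $i$ the operators $\widetilde{f}_{il}$ kill elements of $B(\lambda)$ with $\phi_i=0$ while they never kill elements of $B(\infty)$). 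The reconciliation happens only modulo $qL(\lambda)$, and in every known proof (Kashiwara's, \cite{JKK2005}, Bozec's \cite{Bozec2014c}) it is established inside the ``grand loop'': a simultaneous induction on weight which proves at once that $L(\lambda)$ is spanned by the $\widetilde{f}$-monomials applied to $v_\lambda$, that the Kashiwara operators preserve $L(\lambda)$, that $B(\lambda)$ is linearly independent at $q=0$, and that $p_\lambda$ intertwines the operators. None of these can be obtained independently of the others, so the Key Lemma cannot be dispatched as ``bookkeeping''; as your plan stands, the hardest part of the proposition is assumed rather than proven, which is presumably why the paper cites the literature here instead of giving an argument.
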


\begin{remark} The map $\pi_{\lambda}$ is {\it not} a crystal embedding because it does not preserve the functions
$\text{wt}$ and $\phi_i$ $(i \in I)$.

\end{remark}

\vskip 2mm

The following example provides a very important class of abstract crystals $B_{i}$ $(i \in I)$ called the {\it elementary crystals}.

\vskip 2mm

\begin{example} \label{ex:Bi} \hfill

\vskip 2mm

{\rm

(a) If $i \in I^{\text{re}}$, let $B_{i} = \{ (l) \mid l \ge 0 \}$ and define
\begin{equation*}
\begin{aligned}
& \text{wt}((l)) = - l \alpha_{i}, \\
& \epsilon_i((l)) =l, \ \ \phi_i((l)) = -l, \ \ \epsilon_j((l))= \phi_j((l)) = -\infty \ \ \text{for} \ j \neq i, \\
& \widetilde{e_i} ((l)) = (l-1), \ \ \widetilde{f_i}((l)) = (l+1), \\
& \widetilde{e}_{jk} ((l)) = \widetilde{f}_{jk}((l)) = 0 \ \ \text{for} \ j \neq i.
\end{aligned}
\end{equation*}
Then $B_{i}$ is an abstract crystal. The crystal graph of $B_{i}$ is given in Figure 1.

\vskip 2mm

{\bf Figure 1}
$$\xymatrix{(0)\ar[r]^i&(1)\ar[r]^i&(2)\ar[r]^i & \cdots}$$
We understand $(l)=0$ for $l<0$.
\vskip 2mm

(b) For $i \in I^{\text{im}} \setminus I^{\text{iso}}$, let $B_{i} = \{ \mathbf{c} = (c_1, \ldots, c_r) \mid r \ge 0, \  \text{$\mathbf{c}$ is a composition} \}$ and define
\begin{equation*}
\begin{aligned}
& \text{wt}(\mathbf{c}) = - |\mathbf{c}| \alpha_{i}, \ \ \epsilon_i(\mathbf{c}) = 0, \ \ \phi_i(\mathbf{c}) = - |\mathbf{c}| a_{ii}, \\
& \epsilon_j(\mathbf{c}) = \phi_j(\mathbf{c}) = - \infty\ \ \text{for} \ j \neq i, \\
& \widetilde{e}_{il}(\mathbf{c}) = \begin{cases} \mathbf{c} \setminus l = (c_2, \ldots, c_r)  & \text{if} \ c_1 = l. \\
0   & \text{otherwise},   \end{cases} \\
& \widetilde{f}_{il}(\mathbf{c}) = (l, \mathbf{c}) = (l, c_1, \ldots, c_r), \\
& \widetilde{e}_{jk}(\mathbf{c}) = \widetilde{f}_{jk}(\mathbf{c}) = 0 \ \ \text{for}  \ j \neq i.
\end{aligned}
\end{equation*}
Then $B_{i}$ is an abstract crystal. The crystal graph of $B_{i}$ is given in Figure 2.

\vskip 2mm

{\bf Figure 2}
$$\xymatrix{&(0)\ar[dl]_{(i,1)}\ar[dr]^{(i,2) \ \ \cdots}&&\\
(1)\ar[d]_{(i,1)}\ar[dr]^{(i,2) \ \ \cdots}&&(2)\ar[d]_{(i,1)}\ar[dr]^{(i,2) \ \ \cdots}&\\
(11)\ar[d]_{(i,1)}^{\ \ \cdots}&(21)\ar[d]_{(i,1)}^{\ \ \cdots}&(12)\ar[d]_{(i,1)}^{\ \ \cdots}&(22)\ar[d]_{(i,1)}^ {\ \ \cdots}\\
(111)&(121)&(112)&(122)}$$


\vskip 2mm

(c) For $i \in I^{\text{iso}}$, let $B_{i} = \{ \mathbf{c} = (c_1, \ldots, c_r) \mid r \ge 0, \  \text{$\mathbf{c}$ is a partition} \}$ and define

\begin{equation*}
\begin{aligned}
& \text{wt}(\mathbf{c}) = - |\mathbf{c}| \alpha_{i}, \ \ \epsilon_i(\mathbf{c}) = \phi_i(\mathbf{c}) = 0, \\
& \epsilon_j(\mathbf{c}) = \phi_j(\mathbf{c}) = - \infty\ \ \text{for} \ j \neq i, \\
& \widetilde{e}_{il}(\mathbf{c}) = \begin{cases} \mathbf{c} \setminus \{l\}  & \text{if $l$ is a part of $\mathbf{c}$}, \\
0   & \text{otherwise},   \end{cases} \\
& \widetilde{f}_{il}(\mathbf{c}) = \mathbf{c} \cup l =(l, c_1, \ldots, c_r ), \\
& \widetilde{e}_{jk}(\mathbf{c}) = \widetilde{f}_{jk}(\mathbf{c}) = 0 \ \ \text{for} \ j \neq i.
\end{aligned}
\end{equation*}
Then $B_{i}$ is an abstract crystal. A part of the crystal graph of $B_{i}$ is given in Figure 3.
\vskip 2mm

{\bf Figure 3}
$$\xymatrix{& (0)\ar[dl]_{(i,1)}\ar[d]^{(i,2)}\ar[dr]^{(i,3)}&\\
(1)\ar[d]_(.35){(i,2)}\ar[dr]|(.35){(i,3)}&(2)\ar[dl]|(.35){(i,1)}\ar[dr]|(.35){(i,3)}&(3)\ar[dl]|(.35){(i,1)}\ar[d]^(.35){(i,2)}\\
(12)\ar[dr]_{(i,3)}&(13)\ar[d]^(.4){(i,2)}&(23)\ar[dl]^{(i,1)}\\
& (123) & }$$
We will write $\mathbf{c}_{i}$ for $\mathbf{c} \in B_{i}$ when we would like to emphasize the index $i$.
}
\end{example}


\begin{definition} \label{def:normal}
An abstract crystal $B$ is {\it normal} if
$$\epsilon_i(b)=0, \ \ \phi_i(b) \ge 0 \ \ \text{for all} \ i \in I^{\text{im}},  b \in B.$$
\end{definition}

\begin{remark}
The crystals $B(\infty)$, $B(\lambda)$ $(\lambda \in P^{+})$ and $C$ are normal, while $T_{\lambda}$ $(\lambda \in P)$ and $B_{i}$ $(i \in I)$ are not normal.
\end{remark}

\vskip 10mm

\section{Tensor product of crystals}

\vskip 2mm

Let $B_1$, $B_2$ be abstract crystals and let $B_1 \otimes B_2 = \{ b_1 \otimes b_2 \mid b_1 \in B_1, b_2 \in B_2 \}$ (as a set).  Define the maps $\text{wt}$, $\epsilon_i$, $\phi_i$ $(i \in I)$, $\widetilde{e}_{il}$, $\widetilde{f}_{il}$ $((i,l) \in I^{\infty})$ as follows.

\begin{equation} \label{eq:wt}
\begin{aligned}
& \text{wt}(b_1\otimes b_2)=\text{wt}(b_1)+\text{wt}(b_2),\\
& \epsilon_i(b_1\otimes b_2)=\text{max}(\epsilon_i(b_1),\epsilon_i(b_2)- \langle h_i, \text{wt} (b_1) \rangle),\\
& \phi_i(b_1\otimes b_2)=\text{max}(\phi_i(b_1)+\langle h_i, \text{wt}(b_2) \rangle,\phi_i(b_2)),\\
\end{aligned}
\end{equation}

If $i \in I^{\text{re}}$,
\begin{equation} \label{eq:real}
\begin{aligned}
& \widetilde{e}_{i}(b_1 \otimes b_2) = \begin{cases} \widetilde{e}_{i} b_1 \otimes b_2  & \text{if} \  \phi_{i}(b_1) \ge \epsilon_{i}(b_2), \\
b_1 \otimes \widetilde{e}_{i} b_2  & \text{if} \ \phi_{i}(b_1) < \epsilon_{i}(b_2),
\end{cases} \\
& \widetilde{f}_{i}(b_1 \otimes b_2) = \begin{cases} \widetilde{f}_{i} b_1 \otimes b_2  & \text{if} \ \phi_{i}(b_1) > \epsilon_{i}(b_2), \\
b_1 \otimes \widetilde{f}_{i} b_2  & \text{if} \ \phi_{i}(b_1) \le \epsilon_{i}(b_2).
\end{cases}
\end{aligned}
\end{equation}

If $i \in I^{\text{im}}$,
\begin{equation} \label{eq:im}
\begin{aligned}
& \widetilde{e}_{il}(b_1 \otimes b_2) =  \begin{cases} \widetilde{e}_{il} b_1 \otimes b_2  & \text{if} \ \phi_{i}(b_1) > \epsilon_{i}(b_2) - l a_{ii}, \\
0  & \text{if} \ \epsilon_{i}(b_{2}) < \phi_{i}(b_{1}) \le \epsilon_{i}(b_{2}) - l a_{ii}, \\
b_{1} \otimes \widetilde{e}_{il} b_2 \ \ & \text{if} \ \phi_{i}(b_1) \le \epsilon_{i}(b_{2}),
\end{cases}\\
& \widetilde{f}_{il}(b_1 \otimes  b_2) =\begin{cases} \widetilde{f}_{il} b_1 \otimes b_2  & \text{if} \ \phi_{i}(b_1) > \epsilon_{i}(b_2), \\
b_{1} \otimes \widetilde{f}_{il} b_2  & \text{if} \ \phi_{i}(b_1) \le \epsilon_{i}(b_2).
\end{cases}
\end{aligned}
\end{equation}

Note that if $a_{ii}=0$, we have $\widetilde{e}_{il}(b_1 \otimes b_2)=\widetilde{e}_{il} b_1 \otimes b_2 $ when $\ \phi_{i}(b_1) > \epsilon_{i}(b_2)$, and $\widetilde{e}_{il}(b_1 \otimes b_2)=b_{1} \otimes \widetilde{e}_{il} b_2 $ when $ \phi_{i}(b_1) \le \epsilon_{i}(b_{2})$.

\vskip 2mm

\begin{proposition}
{\rm
The set $B_{1} \otimes B_{2}$  together with the maps $\text{wt}$, $\epsilon_i$, $\phi_i$ $(i \in I)$, $\widetilde{e}_{il}$, $\widetilde{f}_{il}$ $((i,l) \in I^{\infty})$ is an abstract crystal. }
\end{proposition}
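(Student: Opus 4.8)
The plan is to verify directly that the maps defined in \eqref{eq:wt}, \eqref{eq:real} and \eqref{eq:im} satisfy each of the seven axioms (i)--(vii) of Definition \ref{ac}. Axioms (i) and (ii) are immediate: since $\text{wt}$ is additive on $B_1 \otimes B_2$ and each Kashiwara operator $\widetilde{e}_{il}, \widetilde{f}_{il}$ acts by applying the corresponding operator of $B_1$ or $B_2$ to a single tensor factor, the weight of a nonzero result changes by exactly $\pm l \alpha_i$ by the weight axioms for $B_1$ and $B_2$. For axiom (iii), I would substitute $\phi_i(b_k) = \epsilon_i(b_k) + \langle h_i, \text{wt}(b_k) \rangle$ $(k=1,2)$ into the formula for $\phi_i(b_1 \otimes b_2)$ and use the elementary identity $\max(x,y) + c = \max(x+c, y+c)$ to rewrite it as $\epsilon_i(b_1 \otimes b_2) + \langle h_i, \text{wt}(b_1 \otimes b_2) \rangle$; this is a one-line computation once the $-\infty$ conventions are fixed.

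The core of the argument is axioms (iv), (v) and (vi), all of which I would establish by a case analysis governed by the comparison of $\phi_i(b_1)$ with $\epsilon_i(b_2)$ (for $i \in I^{\text{re}}$), or by the three-way split at $\epsilon_i(b_2)$ and $\epsilon_i(b_2) - l a_{ii}$ (for $i \in I^{\text{im}}$; note $-l a_{ii} \ge 0$, so the two thresholds are correctly ordered). For the inversion axiom (iv) in the imaginary case, suppose $\widetilde{f}_{il}(b_1 \otimes b_2) = \widetilde{f}_{il} b_1 \otimes b_2$, which occurs when $\phi_i(b_1) > \epsilon_i(b_2)$; setting $b_1' = \widetilde{f}_{il} b_1$ and using the shift $\phi_i(b_1') = \phi_i(b_1) - l a_{ii}$ from axiom (vi)(b) for $B_1$, one checks $\phi_i(b_1') > \epsilon_i(b_2) - l a_{ii}$, so $\widetilde{e}_{il}$ acts again on the left factor and returns $\widetilde{e}_{il} b_1' \otimes b_2 = b_1 \otimes b_2$ by axiom (iv) for $B_1$. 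The case where $\widetilde{f}_{il}$ acts on the right factor is handled symmetrically, using $\epsilon_i(\widetilde{f}_{il} b_2) = \epsilon_i(b_2)$ from axiom (vi)(b) for $B_2$; the real case is the standard Kashiwara rule. Axioms (v) and (vi) then follow by reading off $\epsilon_i$ and $\phi_i$ of the result in each region from \eqref{eq:wt} together with the corresponding single-factor shifts.

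Finally, for axiom (vii), I would note that $\phi_i(b_1 \otimes b_2) = -\infty$ forces $\phi_i(b_1) = \phi_i(b_2) = -\infty$, the term $\langle h_i, \text{wt}(b_2) \rangle$ being finite, whence axiom (vii) for $B_1$ and $B_2$ annihilates both factors, and therefore $\widetilde{e}_{il}$ and $\widetilde{f}_{il}$ annihilate $b_1 \otimes b_2$ regardless of which factor the tensor rule selects. I expect the main obstacle to be axiom (iv) for imaginary $i$: one must verify that the middle region of \eqref{eq:im}, where $\widetilde{e}_{il}$ is declared to vanish, is \emph{exactly} the set of tensors arising as the image of neither an $\widetilde{f}_{il}$ acting on the left (which would force $\phi_i(b_1) > \epsilon_i(b_2) - l a_{ii}$) nor on the right (which would force $\phi_i(b_1) \le \epsilon_i(b_2)$), so that $\widetilde{e}_{il}$ and $\widetilde{f}_{il}$ are genuinely mutually inverse partial bijections. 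Keeping the $-\infty$ bookkeeping consistent across all of these cases is the other point that requires care.
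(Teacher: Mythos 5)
Your proposal is correct and follows essentially the same route as the paper's proof: delegate the real-index case to Kashiwara's classical tensor product argument, check axiom (vii) via the $-\infty$ bookkeeping, and verify axioms (iv) and (vi) for imaginary $i$ by the case analysis on $\phi_i(b_1)$ versus the thresholds $\epsilon_i(b_2)$ and $\epsilon_i(b_2)-la_{ii}$, using the single-factor shifts $\phi_i(\widetilde{f}_{il}b_1)=\phi_i(b_1)-la_{ii}$ and $\epsilon_i(\widetilde{f}_{il}b_2)=\epsilon_i(b_2)$. The "main obstacle" you flag (that the middle zero region of $\widetilde{e}_{il}$ is disjoint from the image of $\widetilde{f}_{il}$, and that nonzero $\widetilde{e}_{il}$-outputs return under $\widetilde{f}_{il}$) is exactly the two-directional check the paper carries out, resolved by the same inequalities you state.
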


\begin{proof}  \ It is clear that the conditions (i), (ii)  and (iii) in Definition \ref{ac} hold. If $i \in I^{\text{re}}$, it was shown in \cite{Kas93} that all the conditions are satisfied.  For the condition (vii), if $\phi_{i}(b_1\otimes b_2)=-\infty$, since
$$\phi_i(b_1\otimes b_2)=\text{max}(\phi_i(b_1)+\langle h_i, \text{wt}(b_2) \rangle ,\phi_i(b_2)),$$ we have $\phi_{i}(b_1) = \phi_{i}(b_2) = -\infty$. Thus
$$\widetilde{e}_{il} (b_1 \otimes b_2)= \widetilde{f}_{il}(b_1 \otimes b_2) =0.$$

From now on, we will assume that  $i \in I^{\text{im}}$, $l \ge 1$. In this case, we don't have to check the condition (v). Hence we have only to verify the conditions (iv) and (vi).

\vskip 2mm






Suppose
$\widetilde{f}_{il}(b_1 \otimes b_2) = b_{1}' \otimes b_{2}'$.

\vskip 2mm

If $\phi_{i}(b_1) > \epsilon_{i}(b_2)$, then $\widetilde{f}_{il}(b_1 \otimes b_2) = \widetilde{f}_{il} b_1 \otimes b_2 = b_{1}' \otimes b_{2}'$. Thus
$$ \phi_{i}(b_{1}') = \phi_{i}(\widetilde{f}_{il}b_1) = \phi_{i}(b_1) - l a_{ii} > \epsilon_{i}(b_2) - l a_{ii}  =  \epsilon_{i}(b_{2}') - l a_{ii},$$ which implies $\widetilde{e}_{il}(b_{1}' \otimes b_{2}') = \widetilde{e}_{il} b_{1}' \otimes b_{2}' = b_{1} \otimes b_{2}.$

\vskip 2mm

If, $\phi_{i}(b_1) \le \epsilon_{i}(b_2)$, then $\widetilde{f}_{il}(b_1 \otimes b_2) = b_1 \otimes \widetilde{f}_{il} b_2 =b_{1}' \otimes b_{2}'$. Note that
$$\phi_{i}(b_{1}') = \phi_{i}(b_{1}) \le \epsilon_{i}(b_{2}) =  \epsilon_{i}(\widetilde{f}_{il} b_{2}) = \epsilon_{i}(b_{2}').$$
Thus $\widetilde{e}_{il}(b_{1}'\otimes b_{2}') = b_{1}' \otimes \widetilde{e}_{il} b_{2}' =b_1 \otimes b_2$.

\vskip 2mm

Conversely, suppose $\widetilde{e}_{il}(b_{1}' \otimes b_{2}') =b_1 \otimes b_2$.
Since  $\widetilde{e}_{il}(b_{1}' \otimes b_{2}') \neq 0$, we don't have to consider the case $\epsilon_{i}(b_{2}') < \phi_{i}(b_{1}') \le \epsilon_{i}(b_{2}') - l a_{ii}$.

\vskip 2mm

If $\phi_{i}(b_{1}') > \epsilon_{i}(b_{2}')  - l a_{ii}$, we have $\widetilde{e}_{il}(b_{1}' \otimes b_{2}') = \widetilde{e}_{il} b_{1}' \otimes b_{2}'.$ Note that
$$\phi_{i}(b_{1}) =\phi_{i}(\widetilde{e}_{il} b_{1}') = \phi_{i}(b_{1}') + l a_{ii} > \epsilon_{i}(b_{2}') = \epsilon_{i}(b_{2}).$$ Hence $\phi_{i}(b_{1}) > \epsilon_{i}(b_{2})$ and we obtain $\widetilde{f}_{il}(b_{1} \otimes b_{2}) = \widetilde{f}_{il} b_{1} \otimes b_{2} = b_{1}' \otimes b_{2}'$.

\vskip 2mm

If $\phi_{i}(b_{1}') \le \epsilon_{i}(b_{2}')$, then $\widetilde{e}_{il}(b_{1}' \otimes b_{2}') = b_{1}' \otimes \widetilde{e}_{il} b_{2}'$ and we have
$$\phi_{i}(b_{1}) = \phi_{i}(b_{1}') \le  \epsilon_{i}(b_{2}') =  \epsilon_{i}(\widetilde{e}_{il} b_{2}') = \epsilon_{i}(b_{2}),$$
which gives $\widetilde{f}_{il}(b_{1} \otimes b_{2}) = b_{1} \otimes \widetilde{f}_{il} b_{2} = b_{1}' \otimes b_{2}'$. Hence the condition (iv) is verified.

\vskip 2mm

To verify the condition (vi), let $b_{1} \otimes b_{2} \in B_{1} \otimes B_{2}$ such that $\widetilde{e}_{il}(b_{1} \otimes b_{2}) \neq 0$ for all $l \ge 1$.

\vskip 2mm

If $\phi_{i}(b_1) > \epsilon_{i}(b_2) - l a_{ii}$, then $\widetilde{e}_{il}(b_1 \otimes b_2) = \widetilde{e}_{il} b_{1} \otimes b_{2}$ and hence we get
\begin{equation*}
\begin{aligned}
\epsilon_{i}(\widetilde{e}_{il}(b_1 & \otimes b_2)) = \epsilon_{i}(\widetilde{e}_{il} b_{1} \otimes b_{2}) \\
&= \text{max}(\epsilon_{i}(\widetilde{e}_{il} b_{1}), \epsilon_{i}(b_{2}) - \langle h_{i}, \text{wt}(\widetilde{e}_{il}b_{1}) \rangle
) \\
&= \text{max} (\epsilon_{i}(b_1), \epsilon_{i}(b_2) - \langle h_i, \text{wt}(b_1) + l \alpha_{i} \rangle) \\
& = \text{max} (\epsilon_{i}(b_1), \epsilon(b_2) - \langle h_i, \text{wt}(b_1)\rangle - l a_{ii}) \\
& 
= \epsilon_{i} (b_1)= \epsilon_{i}(b_1 \otimes b_2),
\end{aligned}
\end{equation*}
\begin{equation*}
\begin{aligned}
\phi_{i}(\widetilde{e}_{il}(b_1  & \otimes b_2)) = \phi_{i}(\widetilde{e}_{il} b_1 \otimes b_2) \\
& = \text{max}(\phi_{i}(\widetilde{e}_{il} b_1) + \langle h_{i}, \text{wt}(b_2) \rangle, \phi_{i}(b_2)) \\
& = \text{max} (\phi_{i}(b_1) + l a_{ii} + \phi_{i}(b_2) - \epsilon_{i}(b_2), \phi_{i}(b_2)) \\
& = \phi_{i}(b_1)  + l a_{ii} + \phi_{i}(b_2) - \epsilon_{i}(b_2) = \phi_{i}(b_{1} \otimes b_{2})+la_{ii}.
\end{aligned}
\end{equation*}


If $\phi_{i}(b_{1}) \le \epsilon_{i}(b_2)$, then $\widetilde{e}_{il}(b_1 \otimes b_2) = b_1 \otimes \widetilde{e}_{il}b_2$ and hence we get
\begin{equation*}
\begin{aligned}
\epsilon_{i}(\widetilde{e}_{il} (b_1 & \otimes b_2)) =\epsilon_{i}(b_1 \otimes \widetilde{e}_{il} b_2) \\
& = \text{max} (\epsilon_{i}(b_{1}), \epsilon_{i}(\widetilde{e}_{il}b_2) - \langle h_i, \text{wt}(b_1) \rangle)\\
& = \text{max} (\epsilon_{i}(b_{1}), \epsilon_{i}(b_2) - \langle h_i, \text{wt}(b_1) \rangle)
= \epsilon_{i}(b_1 \otimes b_2),\\
\phi_{i}(\widetilde{e}_{il}(b_1  & \otimes b_2)) = \phi_{i}( b_1 \otimes \widetilde{e}_{il}b_2) \\
& = \text{max}(\phi_{i}( b_1) + \langle h_{i}, \text{wt}(\widetilde{e}_{il} b_2) \rangle, \phi_{i}(\widetilde e_{il}b_2)) \\
& = \text{max}(\phi_{i}(b_1) + \langle h_{i}, \text{wt}(b_2) \rangle + l a_{ii}, \phi_{i}(b_2)+la_{ii}) \\
& = \text{max}(\phi_{i}(b_1) + \langle h_{i}, \text{wt}(b_2) \rangle, \phi_{i}(b_2))+la_{ii}  \\
& = \phi_{i}(b_1 \otimes b_2)+la_{ii}.
\end{aligned}
\end{equation*}

\vskip 2mm

By a similar argument, we can verify the condition (vi) for the Kashiwara operators $\widetilde{f}_{il}$'s.
\end{proof}

\vskip 2mm

\begin{corollary}
{\rm If $B_1$ and $B_2$ are normal crystals, then $B_1 \otimes B_2$ is also a normal crystal.
}
\end{corollary}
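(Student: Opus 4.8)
The plan is to verify the two defining conditions of normality directly from the tensor product formulas in \eqref{eq:wt}, using the normality of $B_1$ and $B_2$ together with condition (iii) of Definition \ref{ac}. Fix $i \in I^{\text{im}}$ and an arbitrary element $b_1 \otimes b_2 \in B_1 \otimes B_2$. Since $B_1$ and $B_2$ are normal, we have $\epsilon_i(b_1) = \epsilon_i(b_2) = 0$ and $\phi_i(b_1), \phi_i(b_2) \ge 0$, and these will be the only inputs needed.

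First I would handle the statement $\epsilon_i(b_1 \otimes b_2) = 0$. Substituting $\epsilon_i(b_1) = \epsilon_i(b_2) = 0$ into the formula for $\epsilon_i$ in \eqref{eq:wt} gives $\epsilon_i(b_1 \otimes b_2) = \max(0, -\langle h_i, \text{wt}(b_1)\rangle)$, so the claim reduces to showing $\langle h_i, \text{wt}(b_1)\rangle \ge 0$. The key observation is that condition (iii) of Definition \ref{ac} together with $\epsilon_i(b_1) = 0$ yields $\langle h_i, \text{wt}(b_1)\rangle = \phi_i(b_1) - \epsilon_i(b_1) = \phi_i(b_1) \ge 0$, which is exactly the required inequality. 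Hence the maximum equals its first argument, namely $0$.

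For the remaining condition $\phi_i(b_1 \otimes b_2) \ge 0$, the formula for $\phi_i$ in \eqref{eq:wt} expresses $\phi_i(b_1 \otimes b_2)$ as a maximum whose second argument is $\phi_i(b_2)$; since $\phi_i(b_2) \ge 0$ by normality of $B_2$, monotonicity of the maximum gives $\phi_i(b_1 \otimes b_2) \ge \phi_i(b_2) \ge 0$ with no further computation. There is no genuine obstacle in this argument: the entire content is the single observation that normality of $B_1$ forces $\langle h_i, \text{wt}(b_1)\rangle$ to be non-negative via condition (iii), after which both parts of the definition of normal crystal follow at once from the tensor product formulas. The only point requiring care is to invoke condition (iii) in the correct direction so as to translate the hypothesis $\phi_i(b_1) \ge 0$ into the needed bound on the pairing $\langle h_i, \text{wt}(b_1)\rangle$.
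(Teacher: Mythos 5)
Your proof is correct and follows essentially the same route as the paper: both substitute $\epsilon_i(b_1)=\epsilon_i(b_2)=0$ into the formulas \eqref{eq:wt} and use condition (iii) of Definition \ref{ac} to identify $\langle h_i, \text{wt}(b_1) \rangle = \phi_i(b_1) \ge 0$, forcing $\epsilon_i(b_1\otimes b_2)=0$. The only immaterial difference is in the second half: you bound $\phi_i(b_1\otimes b_2)$ below by the second argument $\phi_i(b_2)$ of the maximum, while the paper also rewrites $\langle h_i, \text{wt}(b_2) \rangle = \phi_i(b_2)$ before concluding non-negativity; both yield the result immediately.
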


\begin{proof} \ For $i \in I^{\text{im}}$, it is easy to see that
\begin{equation*}
\begin{aligned}
\epsilon_{i}(b_1 \otimes b_2) & = \text{max}(\epsilon_{i}(b_1), \epsilon_{i}(b_2) - \langle h_i, \text{wt}(b_{1}) \rangle )\\
& = \text{max}(0, -\phi_{i}(b_1)) = 0, \\
\phi_{i}(b_1 \otimes b_2) & = \text{max}(\phi_{i}(b_1) + \langle h_i, \text{wt}(b_2) \rangle , \phi_{i}(b_2)) \\
& = \text{max}(\phi_{i}(b_1) + \phi_{i}(b_2), \phi_{i}(b_2))  \ge 0,
\end{aligned}
\end{equation*}
as desired.
\end{proof}

\vskip 2mm

\begin{corollary} \label{cor:lambda-mu}
{\rm For $\lambda, \mu \in P^{+}$, there exists a unique strict crystal embedding
\begin{equation} \label{eq:lambda-mu}
\Phi_{\lambda, \mu}\colon B(\lambda + \mu) \rightarrow B(\lambda) \otimes B(\mu) \ \ \text{given by} \ v_{\lambda+\mu} \mapsto \ v_{\lambda} \otimes v_{\mu}.
\end{equation}
}
\end{corollary}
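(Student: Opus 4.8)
The plan is to define $\Phi_{\lambda,\mu}$ on the highest weight vector, extend it by forcing it to commute with the lowering operators, and then establish well-definedness and injectivity by realizing the map as the restriction of an honest module inclusion at the level of crystal bases.

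First I would check that $v_\lambda \otimes v_\mu$ behaves like a highest weight element of weight $\lambda+\mu$. From \eqref{eq:wt} together with $\epsilon_i(v_\lambda)=\epsilon_i(v_\mu)=0$ for $i\in I^{\text{im}}$ and $\epsilon_i(v_\lambda)=0$, $\phi_i(v_\lambda)=\langle h_i,\lambda\rangle\ge 0$ for $i\in I^{\text{re}}$ (using $\lambda\in P^{+}$), one computes $\text{wt}(v_\lambda\otimes v_\mu)=\lambda+\mu$, $\epsilon_i(v_\lambda\otimes v_\mu)=0$, and, via \eqref{eq:real}--\eqref{eq:im}, that $\widetilde{e}_{il}(v_\lambda\otimes v_\mu)=0$ for all $(i,l)\in I^{\infty}$. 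Since $B(\lambda+\mu)$ is connected and generated by $v_{\lambda+\mu}$, every element has the form $\widetilde{f}_{i_1,l_1}\cdots\widetilde{f}_{i_r,l_r}v_{\lambda+\mu}$, so any strict embedding with $v_{\lambda+\mu}\mapsto v_\lambda\otimes v_\mu$ is forced to satisfy $\Phi_{\lambda,\mu}(\widetilde{f}_{i_1,l_1}\cdots\widetilde{f}_{i_r,l_r}v_{\lambda+\mu})=\widetilde{f}_{i_1,l_1}\cdots\widetilde{f}_{i_r,l_r}(v_\lambda\otimes v_\mu)$. This gives uniqueness for free and pins down exactly what $\Phi_{\lambda,\mu}$ must be.

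The heart of the matter is to show this prescription is well defined (independent of the chosen $\widetilde{f}$-word) and injective, and here I would pass to representation theory. The assignment $v_{\lambda+\mu}\mapsto v_\lambda\otimes v_\mu$ extends to a $U_q(\g)$-module embedding $V(\lambda+\mu)\hookrightarrow V(\lambda)\otimes V(\mu)$ onto the highest weight submodule $U_q(\g)(v_\lambda\otimes v_\mu)$, the analogue of the Cartan component (irreducibility of $V(\lambda+\mu)$ identifies it with this submodule). Because the tensor product rules \eqref{eq:wt}--\eqref{eq:im} are designed to compute the Kashiwara operators on $V(\lambda)\otimes V(\mu)$ coming from the comultiplication \eqref{eq:comult}, the pair $(L(\lambda)\otimes_{\mathbb A_0} L(\mu),\, B(\lambda)\otimes B(\mu))$ is a crystal basis of $V(\lambda)\otimes V(\mu)$ whose operators are given by the abstract tensor product. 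Restricting this crystal basis along the module embedding and reducing modulo $q$ then identifies $B(\lambda+\mu)$ with the subset of $B(\lambda)\otimes B(\mu)$ lying in the image, which is precisely the $\widetilde{f}$-string emanating from $v_\lambda\otimes v_\mu$. This makes $\Phi_{\lambda,\mu}$ a well-defined injection commuting with every $\widetilde{e}_{il}$ and $\widetilde{f}_{il}$, hence a strict crystal embedding.

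I expect the main obstacle to be the compatibility statement that the crystal lattice and crystal basis of the submodule restrict correctly, namely $V(\lambda+\mu)\cap\big(L(\lambda)\otimes L(\mu)\big)=L(\lambda+\mu)$ and that reduction modulo $q$ carries $B(\lambda+\mu)$ into $B(\lambda)\otimes B(\mu)$. In the symmetrizable Kac--Moody case this follows from the polarization and compatibility arguments for crystal bases of submodules, and the same mechanism should go through for quantum Borcherds--Bozec algebras using Bozec's construction of $(L(\lambda),B(\lambda))$; the delicate point is the imaginary indices $i\in I^{\text{im}}$, where the Kashiwara operators involve the primitive generators $t_{il}$ and, in the isotropic case, the square-root coefficients, so one must verify that the abstract rule \eqref{eq:im} genuinely matches the operators induced by \eqref{eq:comult}. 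A purely combinatorial alternative---identifying the subcrystal generated by $v_\lambda\otimes v_\mu$ directly with $B(\lambda+\mu)$---is also available, but it would rely on the characterization of $B(\lambda+\mu)$ established in Section~4, so I prefer the representation-theoretic route in order to keep this corollary self-contained within Section~3.
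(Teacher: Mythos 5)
Your proposal is correct and follows essentially the same route as the paper: the paper's own two-line proof makes exactly your opening observation --- that $v_{\lambda}\otimes v_{\mu}$ is the unique element of weight $\lambda+\mu$ in $B(\lambda)\otimes B(\mu)$ and is annihilated by every $\widetilde{e}_{il}$ --- and then concludes that the connected component containing it is a full subcrystal isomorphic to $B(\lambda+\mu)$. The justification of that final step, which the paper leaves implicit, is precisely the representation-theoretic content you spell out, namely that the tensor product rule computes the crystal of $V(\lambda)\otimes V(\mu)$, so that the highest weight component $V(\lambda+\mu)\subset V(\lambda)\otimes V(\mu)$ carries the connected component of $v_{\lambda}\otimes v_{\mu}$.
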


\begin{proof} \ By the tensor product rule, $v_{\lambda} \otimes v_{\mu}$ is the only element of weight $\lambda + \mu$ in $B(\lambda) \otimes B(\mu)$. Moreover, it is annihilated by all $\widetilde{e}_{il}$'s.  Hence the connected component of $B(\lambda) \otimes B(\mu)$ containing $v_{\lambda} \otimes v_{\mu}$ is the full subcrystal of $B(\lambda) \otimes B(\mu)$ which is isomorphic to $B(\lambda + \mu)$.
\end{proof}

\vskip 2mm

In the following proposition, we will show that the tensor product of crystals satisfies the {\it associativity law}.

\vskip 2mm

\begin{proposition} \label{prop:associative}

{\rm \ Let $B_{i}$ $(i=1, 2, 3)$ be abstract crystals. Then there exists a unique crystal isomorphism
\begin{equation} \label{eq:associative}
\Psi\colon (B_1 \otimes B_2) \otimes B_3 \overset{\sim} \longrightarrow B_1 \otimes (B_2 \otimes B_3)
\end{equation}
given by $(b_1 \otimes b_2) \otimes b_3) \mapsto b_1 \otimes (b_2 \otimes b_3)$, where  $b_i \in B_i$ $(i=1, 2, 3)$.
}
\end{proposition}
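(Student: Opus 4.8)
The plan is to show that the explicitly given bijection $\Psi$ is a strict isomorphism of crystals; uniqueness is then automatic, since $\Psi$ is already prescribed on every element. As the underlying map is a bijection of sets and $\text{wt}$ is additive under $\otimes$, the identity $\text{wt}(\Psi(x))=\text{wt}(x)$ is immediate. It therefore remains to check that $\Psi$ preserves $\epsilon_i$ and $\phi_i$ for all $i\in I$ and that it commutes with the Kashiwara operators.

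First I would verify the preservation of $\epsilon_i$ and $\phi_i$. Expanding the defining formula \eqref{eq:wt} twice, both $\epsilon_i((b_1\otimes b_2)\otimes b_3)$ and $\epsilon_i(b_1\otimes(b_2\otimes b_3))$ become a nested maximum of three terms, and likewise for $\phi_i$; using the relation $\phi_i(b)=\epsilon_i(b)+\langle h_i,\text{wt}(b)\rangle$ from Definition \ref{ac}(iii) together with the additivity of $\text{wt}$, the two nested maxima collapse to the same symmetric expression. This is a routine computation, with the only care needed in the cases where some of the values equal $-\infty$, which are handled by the usual conventions.

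The core of the argument is the commutation with $\widetilde{f}_{il}$. For $i\in I^{\text{re}}$ this is precisely Kashiwara's associativity for the tensor product rule \eqref{eq:real}, which I would cite from \cite{Kas93}. For $i\in I^{\text{im}}$ and $l\ge 1$, observe that the $\widetilde{f}_{il}$-rule \eqref{eq:im} selects the tensor factor on which to act by exactly the same threshold comparison $\phi_i(X)>\epsilon_i(Y)$ as in the real case. Thus I would run the three-way case analysis according to whether $\widetilde{f}_{il}$ lands on $b_1$, $b_2$ or $b_3$, and show that the selection agrees when computed in $(B_1\otimes B_2)\otimes B_3$ and in $B_1\otimes(B_2\otimes B_3)$. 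Concretely, using $\phi_i(b_1\otimes b_2)=\max(\phi_i(b_1)+\langle h_i,\text{wt}(b_2)\rangle,\phi_i(b_2))$ and $\epsilon_i(b_2\otimes b_3)=\max(\epsilon_i(b_2),\epsilon_i(b_3)-\langle h_i,\text{wt}(b_2)\rangle)$ together with $\phi_i(b_2)=\epsilon_i(b_2)+\langle h_i,\text{wt}(b_2)\rangle$, the condition ``act on $b_1$'' unravels on both sides to $\phi_i(b_1)>\epsilon_i(b_2)$ and $\phi_i(b_1)>\epsilon_i(b_3)-\langle h_i,\text{wt}(b_2)\rangle$; the remaining two cases are analogous. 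Since both parenthesizations select the same factor and the single-factor value of $\widetilde{f}_{il}$ is the same, the two results coincide under reassociation even when one of them is $0$, giving $\Psi\circ\widetilde{f}_{il}=\widetilde{f}_{il}\circ\Psi$.

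Finally, having shown that $\Psi$ preserves $\text{wt},\epsilon_i,\phi_i$ and commutes with every $\widetilde{f}_{il}$, $\Psi$ is a crystal morphism in the sense of Definition \ref{mor}; being bijective, Remark \ref{rem:iso}(b) guarantees that $\Psi^{-1}$ is also a morphism and that both $\Psi$ and $\Psi^{-1}$ commute with all $\widetilde{e}_{il}$. Hence $\Psi$ is a strict isomorphism, as desired. I expect the main obstacle to be the $\widetilde{e}_{il}$-rule for imaginary $i$, which carries an extra degenerate ``$0$'' branch in \eqref{eq:im} governed by the shifts $la_{ii}$; checking its associativity directly would require matching these intermediate zero-regions across the two parenthesizations. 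The point of deferring $\widetilde{e}_{il}$ to the bijectivity argument is precisely to bypass this delicate case analysis, reducing all the substantive work to the cleaner $\widetilde{f}_{il}$ threshold $\phi_i$ versus $\epsilon_i$.
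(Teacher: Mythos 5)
Your proposal is correct and follows essentially the same route as the paper: verify that $\Psi$ preserves $\text{wt}$, $\epsilon_i$, $\phi_i$, check commutation with the $\widetilde{f}_{il}$ via a case analysis on the threshold comparisons $\phi_i$ versus $\epsilon_i$ (which unravel to the same conditions under both parenthesizations), and then invoke bijectivity together with Remark \ref{rem:iso} to avoid checking the $\widetilde{e}_{il}$ altogether — exactly the paper's strategy. The only cosmetic difference is that you establish preservation of $\epsilon_i,\phi_i$ by collapsing the nested maxima into a common three-term maximum, while the paper computes these functions case by case alongside the $\widetilde{f}_{il}$ analysis.
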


\begin{proof} \ It is clear that $\Psi$ is a bijection and preserves the function $\text{wt}$. We shall show that $\Psi$ preserves $\epsilon_{i}$, $\phi_{i}$ $(i \in I)$ and commutes with $\widetilde{f}_{il}$ $((i,l) \in I^{\infty})$.  As we have seen in (\ref{rem:iso}), since $\Psi$ is a bijection, we don't have to check the commutativity with $\widetilde{e}_{il}$'s.

\vskip 2mm

Let $b=(b_1 \otimes b_2) \otimes b_3$ and $b' = \Psi(b) =b_1 \otimes (b_2 \otimes b_3)$. Our proof will be divided into the following cases.

\vskip 2mm

\noindent
{\bf Case 1:} $\phi_{i}(b_1 \otimes b_2) > \epsilon_{i}(b_3)$.

\vskip 2mm

(1) First, assume that $\phi_{i}(b_1) > \epsilon_{i}(b_2)$. In this case, we have
$$\phi_{i}(b_1 \otimes b_2) = \phi_{i}(b_1) + \phi_{i}(b_2) -\epsilon_{i}(b_2) > \epsilon(b_3), \ \ \epsilon_{i}(b_1 \otimes b_2) = \epsilon_{i}(b_1), $$ which yields
\begin{equation}\label{eq:a}
\phi_{i}(b_1) + \phi_{i}(b_2) > \epsilon_{i}(b_2) + \epsilon_{i}(b_3).
\end{equation}
Thus we obtain
\begin{equation*}
\begin{aligned}
\phi_{i}(b) & = \phi_{i}((b_1 \otimes b_2) \otimes b_3)
 = \phi_{i}(b_1 \otimes b_2) + \phi_{i}(b_3) - \epsilon_{i}(b_3)\\
& = \phi_{i}(b_1) + \phi_{i}(b_2) + \phi_{i} (b_3) - \epsilon_{i}(b_2) - \epsilon_{i}(b_3), \\
\epsilon_{i}(b) & = \epsilon_{i}((b_1 \otimes b_2) \otimes b_3) = \epsilon_{i}(b_1 \otimes b_2) = \epsilon_{i}(b_1) , \\
\widetilde{f}_{il}(b) & = \widetilde{f}_{il}((b_{1} \otimes b_{2}) \otimes b_3) = \widetilde{f}_{il}(b_1 \otimes b_2) \otimes b_3 = (\widetilde{f}_{il}b_1 \otimes b_2) \otimes b_3.
\end{aligned}
\end{equation*}

On the other hand, to deal with $b'$, we compare $\phi_{i}(b_1)$ and $\epsilon_{i}(b_2 \otimes b_3)$
and obtain

\begin{equation*}
\begin{aligned}
\phi_{i}(b') & = \phi_{i}(b_1 \otimes (b_2 \otimes b_3))\\
&= \phi_{i}(b_1) + \phi_{i}(b_2) + \phi_{i}(b_3) - \epsilon_{i}(b_2) -\epsilon_{i}(b_3),\\
\epsilon_{i}(b') & = \epsilon_{i}(b_1 \otimes (b_2 \otimes b_3)) =\epsilon_{i}(b_1), \\
\widetilde{f}_{il} (b') & = \widetilde{f}_{il}(b_1 \otimes (b_2 \otimes b_3)) =    \widetilde{f}_{il} b_1 \otimes (b_2 \otimes b_3).
\end{aligned}
\end{equation*}

\vskip 2mm

(2) Next, if $\phi_{i}(b_1) \le \epsilon_{i}(b_2)$, by a similar calculation, we obtain
\begin{equation*}
\begin{aligned}
\phi_{i}(b) &= \phi_{i}((b_1 \otimes b_2) \otimes b_3) = \phi_{i}(b_2) + \phi_{i}(b_3) - \epsilon_{i}(b_3), \\
\epsilon_{i}(b) & = \epsilon_{i}((b_1 \otimes b_2) \otimes b_3) = \epsilon_{i}(b_1) + \epsilon_{i}(b_2) - \phi_{i}(b_1), \\
\widetilde{f}_{il}(b) & =\widetilde{f}_{il}((b_1 \otimes b_2) \otimes b_3) = (b_1 \otimes \widetilde{f}_{il} b_2) \otimes b_3,
\end{aligned}
\end{equation*}
and
\begin{equation*}
\begin{aligned}
\phi_{i}(b') &= \phi_{i}(b_1 \otimes (b_2 \otimes b_3)) = \phi_{i}(b_2) + \phi_{i}(b_3) - \epsilon_{i}(b_3), \\
\epsilon_{i}(b') & = \epsilon_{i}(b_1 \otimes (b_2 \otimes b_3)) = \epsilon_{i}(b_1) + \epsilon_{i}(b_2) - \phi_{i}(b_1), \\
\widetilde{f}_{il} (b') & = \widetilde{f}_{il}(b_1 \otimes (b_2 \otimes b_3))  =
 b_1 \otimes (\widetilde{f}_{il} b_2 \otimes b_3).
\end{aligned}
\end{equation*}

\vskip 2mm

\noindent
{\bf Case 2:} $\phi_{i}(b_1 \otimes b_2) \le \epsilon_{i}(b_3)$.

\vskip 2mm

In this case, by comparing $\phi_{i}(b_1)$ and $\epsilon_{i}(b_2 \otimes b_3)$, we obtain

\begin{equation*}
\begin{aligned}
\phi_{i}(b) & = \phi_{i}((b_1 \otimes b_2) \otimes b_3) = \phi_{i}(b_3), \\
\epsilon_{i}(b) & = \epsilon_{i}((b_1 \otimes b_2) \otimes b_3)=\epsilon_{i}(b_1) +\epsilon_{i}(b_2) + \epsilon_{i}(b_3) -\phi_{i}(b_1) - \phi_{i}(b_2), \\
\widetilde{f}_{il}(b) &  = \widetilde{f}_{il}((b_1 \otimes b_2) \otimes b_3) =(b_1 \otimes b_2) \otimes \widetilde{f}_{il} b_3,
\end{aligned}
\end{equation*}
and
\begin{equation*}
\begin{aligned}
\phi_{i}(b') & = \phi_{i}(b_1 \otimes (b_2 \otimes b_3)) = \phi_{i}(b_3), \\
\epsilon_{i}(b') & = \epsilon_{i}(b_1 \otimes (b_2 \otimes b_3)) =  \epsilon_{i}(b_1) + \epsilon_{i}(b_2) + \epsilon_{i}(b_3) - \phi_{i}(b_1) - \phi_{i}(b_2), \\
\widetilde{f}_{il}(b') & = \widetilde{f}_{il}(b_1 \otimes (b_2 \otimes b_3)) = b_1 \otimes (b_2 \otimes \widetilde{f}_{il} b_3).
\end{aligned}
\end{equation*}

Thus we have proved all of our assertions.
\end{proof}

\vspace{10pt}

\section{Crystal embedding theorem}

\vskip 2mm

In this section, we prove one of the main results in this paper, the {\it crystal embedding theorem} for quantum Borcherds-Bozec algebras.

\vskip 2mm

\begin{theorem} \label{thm:crystal_embedding}
{\rm

For $i \in I$, there is a unique strict crystal embedding
\begin{equation*}
\Psi_{i}: B(\infty) \hookrightarrow B(\infty) \otimes B_{i}
\ \ \text{given by} \ \ \mathbf{1} \mapsto \mathbf{1} \otimes (0)_{i}.
\end{equation*}
}
\end{theorem}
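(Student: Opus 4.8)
The plan is to construct the embedding $\Psi_i$ explicitly and then verify it is a well-defined strict morphism. The natural construction comes from the crystal structure of $B(\infty)$ itself: every $b \in B(\infty)$ can be written as $b = \widetilde{f}_{i,\mathbf{c}} \, \widetilde{e}_{i,l_r} \cdots \widetilde{e}_{i,l_1} b'$ for a suitable element $b'$ lying ``deepest'' in the $i$-direction, so I would define $\Psi_i(b) = b_0 \otimes \mathbf{c}_i$, where $b_0$ is obtained from $b$ by removing all $i$-arrows (equivalently, $\widetilde{e}_{il} b_0 = 0$ for all $l$) and $\mathbf{c}_i \in B_i$ records how $b$ is reached from $b_0$ by applying $\widetilde{f}_{il}$'s. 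More precisely, I would first establish that each $b$ has a unique decomposition relative to the index $i$, using the fact from Example~\ref{ex:Binfty} that any $u \in U^-$ admits a unique expansion $u = \sum_{\mathbf{c}} t_{i,\mathbf{c}} u_{\mathbf{c}}$ with $e'_{il} u_{\mathbf{c}} = 0$; this unique expansion passes to $L(\infty)/qL(\infty)$ and furnishes the map $b \mapsto (b_0, \mathbf{c})$.

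Next I would check that $\Psi_i$ is a crystal morphism in the sense of Definition~\ref{mor}, namely that it preserves $\mathrm{wt}$, $\epsilon_j$, $\phi_j$ for all $j \in I$ and commutes with $\widetilde{f}_{il}$ whenever $\widetilde{f}_{il}b \ne 0$. For $j \ne i$ this is essentially immediate because $B_i$ is concentrated in the $i$-direction: $\widetilde{e}_{jk}$ and $\widetilde{f}_{jk}$ act trivially on the $B_i$-factor, so by the tensor product rules \eqref{eq:wt}--\eqref{eq:im} the $j$-structure of $b_0 \otimes \mathbf{c}_i$ agrees with that of $b_0$, and hence with that of $b$. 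The weight calculation is a direct bookkeeping check since $\mathrm{wt}(\mathbf{c}_i) = -|\mathbf{c}|\alpha_i$ exactly accounts for the $i$-part of $\mathrm{wt}(b)$. The substantive content is the commutation with $\widetilde{f}_{il}$ and the matching of $\epsilon_i, \phi_i$ in the $i$-direction, which requires carefully comparing the Kashiwara operators on $B(\infty)$ with the tensor product operators. Here I would split into the cases $i \in I^{\mathrm{re}}$, $i \in I^{\mathrm{im}}\setminus I^{\mathrm{iso}}$, and $i \in I^{\mathrm{iso}}$, matching the tensor product rule against the definition of $\widetilde{f}_{il}$ on $B_i$ in Example~\ref{ex:Bi}; the key point is that because $\widetilde{e}_{il}b_0 = 0$ for all $l$ forces $\phi_i(b_0)$ to dominate $\epsilon_i(\mathbf{c}_i) = 0$ in the imaginary case (by normality, $\epsilon_i(\mathbf{c}_i)=0$), the tensor product operator always acts on the $B_i$-factor, reproducing the concatenation/prepending rule $\mathbf{c} \mapsto (l,\mathbf{c})$ that governs $\widetilde{f}_{il}$ on $B(\infty)$.

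Having shown $\Psi_i$ is a morphism, strictness (commuting with $\widetilde{e}_{il}$ as well) follows from Remark~\ref{rem:iso}(a) once I verify that the compatibility holds even when $\widetilde{f}_{il}b = 0$, which only happens in the real case where $(l-1)$ can equal $0$ in $B_i$; this edge case I would handle by the standard convention that $\widetilde{e}_{il}$ sends the ``bottom'' element to $0$ on both sides. Injectivity is clear from the uniqueness of the decomposition $b \mapsto (b_0, \mathbf{c})$, and uniqueness of $\Psi_i$ follows because $B(\infty)$ is connected and generated from $\mathbf{1}$ by the $\widetilde{f}_{il}$'s, so any strict morphism sending $\mathbf{1} \mapsto \mathbf{1}\otimes(0)_i$ is forced on all of $B(\infty)$.

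I expect the main obstacle to be the $i \in I^{\mathrm{iso}}$ case, where the Kashiwara operators in Example~\ref{ex:Binfty} carry the square-root normalization factors $\sqrt{m_l(\mathbf{c})/l}$ and $\sqrt{l/(m_l(\mathbf{c})+1)}$. One must confirm that these scalars become invisible modulo $qL(\infty)$ so that the induced operators on $B(\infty)$ genuinely reduce to the combinatorial partition operations $\mathbf{c} \mapsto \mathbf{c}\cup l$ matching $B_i$; otherwise the elementary crystal $B_i$ would not capture the $i$-direction faithfully. The real and non-isotropic imaginary cases are comparatively routine once the tensor product rules are lined up against Example~\ref{ex:Bi}.
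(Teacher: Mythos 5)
There is a genuine gap, and it sits exactly at what you call the ``key point.'' The decomposition you use to build $\Psi_i$ --- namely $u=\sum_{\mathbf{c}}t_{i,\mathbf{c}}u_{\mathbf{c}}$ with $e'_{il}u_{\mathbf{c}}=0$, i.e.\ $b=\widetilde{f}_{i,\mathbf{c}}b_0$ with $\widetilde{e}_{il}b_0=0$ for all $l$ --- is precisely the decomposition that defines the Kashiwara operators themselves, and the map $b\mapsto b_0\otimes\mathbf{c}_i$ it induces is \emph{not} a crystal morphism for the tensor product rules \eqref{eq:real}--\eqref{eq:im}. Your reading of the rule is also backwards: if $\phi_i(b_0)$ dominates $\epsilon_i(\mathbf{c}_i)=0$, then \eqref{eq:im} makes $\widetilde{f}_{il}$ act on the \emph{first} factor, not on the $B_i$-factor. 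This is fatal, not cosmetic. Take $j\neq i$ with $a_{ij}<0$ and $b=\widetilde{f}_{il}\widetilde{f}_{j1}\mathbf{1}$. Your map sends $\widetilde{f}_{j1}\mathbf{1}\mapsto\widetilde{f}_{j1}\mathbf{1}\otimes(0)_i$ and $b\mapsto\widetilde{f}_{j1}\mathbf{1}\otimes(l)_i$; but since $\phi_i(\widetilde{f}_{j1}\mathbf{1})=-a_{ij}>0=\epsilon_i((0)_i)$, the tensor rule gives
$\widetilde{f}_{il}\bigl(\widetilde{f}_{j1}\mathbf{1}\otimes(0)_i\bigr)=\widetilde{f}_{il}\widetilde{f}_{j1}\mathbf{1}\otimes(0)_i\neq\widetilde{f}_{j1}\mathbf{1}\otimes(l)_i$,
so commutation with $\widetilde{f}_{il}$ fails; for real $i$ even $\epsilon_i$ is not preserved, since $\epsilon_i(\widetilde{f}_{j1}\mathbf{1}\otimes(1)_i)=\max(0,1+a_{ij})=0$ while $\epsilon_i(\widetilde{f}_i\widetilde{f}_{j1}\mathbf{1})=1$. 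The square-root issue for isotropic $i$ that you single out as the main obstacle is secondary: the obstruction above already occurs for real $i$ and for non-isotropic imaginary $i$.

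What the strict embedding actually records in the $B_i$-factor is the $i$-tail that splits off on the \emph{opposite} side of the monomial: in Kashiwara's language $\Psi_i(b)=b'\otimes\mathbf{c}$ with $b=\widetilde{f}^{\,*}_{i,\mathbf{c}}b'$, where the $*$-twisted operators come from the right-hand decomposition $u=\sum u_{\mathbf{c}}\,t_{i,\mathbf{c}}$; proving that this right-hand decomposition is compatible with $L(\infty)$ is itself a substantial (grand-loop-type) statement that is nowhere available in this paper. That is exactly why the paper's proof takes a different route, avoiding any $*$-structure: for $b=\widetilde{f}_{i_1,l_1}\cdots\widetilde{f}_{i_r,l_r}\mathbf{1}$ it chooses $\lambda\gg0$ with $b\in\text{Im}\,\pi_\lambda$ (Proposition \ref{P1}), writes $\lambda=\mu+l\Lambda_i$ with $\langle h_i,\mu\rangle=0$, uses the strict embedding $B(\lambda)\hookrightarrow B(\mu)\otimes B(l\Lambda_i)$ of Corollary \ref{cor:lambda-mu}, and shows by induction on $r$ (splitting into the cases $i_1=i$ and $i_1\neq i$) that the image has the form $b'\otimes\widetilde{f}_{i,\mathbf{c}}v_{l\Lambda_i}$ with $\pi_\mu(b')\otimes\mathbf{c}$ independent of the choice of $\lambda$; this is what defines $\Psi_i$ and gives well-definedness, strictness, and injectivity in one stroke. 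To repair your argument you would have to either develop the $*$-decomposition and its compatibility with the crystal lattice for quantum Borcherds-Bozec algebras, or adopt this $B(\lambda)$-detour. Your uniqueness argument (connectedness of $B(\infty)$) is fine as stated.
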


\begin{proof} \ Since there is only one vector of weight $0$ in $B(\infty) \otimes B_{i}$, which is $\mathbf{1} \otimes (0)_{i}$, $\Psi_{i}$ should send $\mathbf{1}$ to $\mathbf{1} \otimes (0)_{i}$ if $\Psi_{i}$ exists, because it is a crystal morphism.

\vskip 2mm

Let $b = \widetilde{f}_{i_1, l_1} \cdots \widetilde{f}_{i_r, l_r} \, \mathbf{1} \in B(\infty)$. Choose $\lambda \in P^{+}$ such that
\begin{itemize}
\item[(i)] $\langle h_j, \lambda \rangle \gg 0$ for all $j \in I$,

\item[(ii)] \, $b \in \text{Im} \pi_{\lambda}$,
\end{itemize}
where $\pi_{\lambda}: B(\lambda) \rightarrow B(\infty)$ is the injective map given in Proposition \ref{P1}.

\vskip 2mm

Set $b_{\lambda} =  \widetilde{f}_{i_1, l_1} \cdots \widetilde{f}_{i_r, l_r} \, v_{\lambda} \in B(\lambda)$ so that $\pi_{\lambda}(b_{\lambda}) = b \in B(\infty)$.

\vskip 2mm

Let $l = \langle h_i, \lambda \rangle $ and $\mu=\lambda - l \Lambda_i \in P^{+}$. Then $\langle h_i, \mu \rangle =0$ and by Corollary \ref{cor:lambda-mu} there exists a unique strict crystal embedding
$$\Phi_{\mu, l \Lambda_i} : B(\lambda) \rightarrow B(\mu) \otimes B(l \Lambda_i) \ \ \text{given by} \ \ v_{\lambda} \mapsto v_{\mu} \otimes v_{l \Lambda_{i}}.$$
We will show that
\begin{itemize}
\item[(1)]  $ \Phi_{\mu, l \Lambda_i}(b_{\lambda}) = b' \otimes \widetilde{f}_{i, \mathbf{c}} v_{l \Lambda_i}$ for some $b' \in B(\mu)$, $\mathbf{c} \in B_{i}$,
\item[(2)]  $\pi_{\mu}(b') \otimes \mathbf{c} \in B(\infty) \otimes B_{i}$ does not depend on the choice of $\lambda \gg 0$.
\end{itemize}
 Here, if $i\in I^{\text{re}}$, then $\mathbf c\in B_{i}$ is a non-negative integer $c$ and we understand $\widetilde{f}_{i, \mathbf{c}}=\widetilde{f}_i^c$.

\vskip 2mm

Once our claims are proved, we will get a well-defined map
$$\Psi_{i} : B(\infty) \rightarrow B(\infty) \otimes B_{i}  \ \ \text{given by} \ \ b \mapsto \pi_{\mu}(b') \otimes \mathbf{c}.$$

We will prove our assertions by induction on $r \ge 0$. When $r=0$, our assertion is obvious. Assume that $r>0$ and our assertion is true for $r-1$.

\vskip 2mm

Set  $b_{1} = \widetilde{f}_{i_2, l_2} \cdots \widetilde{f}_{i_r, l_r} v_{\lambda}$. By our induction hypothesis, we have
\begin{itemize}
\item[(1a)] $\Phi_{\mu, l \Lambda_{i}}(b_1) = b_{1}' \otimes \widetilde{f}_{i, {\mathbf c}'} v_{l \Lambda_i}$ for some $b_1' \in B(\mu)$, ${\mathbf c}' \in B_{i}$,
\item[(2a)] $\pi_{\mu}(b_1') \otimes {\mathbf c}' \in B(\infty) \otimes B_{i}$ does not depend on the choice of $\lambda$.
\end{itemize}

\vskip 2mm

Therefore it suffices to show that
\begin{itemize}
\item[(1b)] $\widetilde{f}_{i_1, l_1}(b_1' \otimes \widetilde{f}_{i, {\mathbf c}'} v_{l \Lambda_i}) = b' \otimes \widetilde{f}_{i, \mathbf{c}} v_{l \Lambda_i}$ for some $b' \in B(\mu)$, $\mathbf{c} \in B_{i}$.

\item[(2b)] $\widetilde{f}_{i_1, l_1}(\pi_{\mu}(b_1') \otimes {\mathbf c}') = \pi_{\mu}(b') \otimes \mathbf{c}$.
\end{itemize}

\vskip 2mm

If $i_1 = i$, then $\langle h_i, \mu \rangle =0$ and hence
\begin{equation*}
\begin{aligned}
& \phi_{i}(b_1') = \phi_{i}(\pi_{\mu}(b_1')), \\
& \epsilon_{i}(\widetilde{f}_{i, {\mathbf c}'} v_{l \Lambda_i}) = \epsilon_{i}({\mathbf c}')
= \begin{cases} c'  & \text{if} \ i \in I^{\text{re}}, \\
0 & \text{if} \ i \in I^{\text{im}}.
\end{cases}
\end{aligned}
\end{equation*}
Therefore, $\widetilde{f}_{i_1, l_1}$ acts on the 1st component (resp. 2nd component) of (1b) if and only if it acts on the 1st component (resp. 2nd component) of (2b), which proves our claim.

\vskip 2mm

If $i_1 \neq i$, then $\epsilon_{i_1}(\mathbf c')=-\infty$ and
$$\phi_{i_1}(b_1')=\phi_{i_1}(\pi_\mu(b_1'))+\mu(h_{i_1})\gg 0=\epsilon_{i_1}(\widetilde{f}_{i,\mathbf c'} v_{l\Lambda_i}).$$
Hence $\widetilde{f}_{i_1, l_1}$ acts on the 1st component of (1b) and (2b), which yields our claim.

\vskip 2mm

It is straightforward to verify that $\Psi_{i}$ is a strict crystal morphism.  \end{proof}

\vskip 2mm

Our next goal is to provide a characterization of the crystals $B(\infty)$ and $B(\lambda)$ $(\lambda \in P^{+})$. To this end, we introduce an important family of crystals arising from tensor products of elementary crystals.

\vskip 2mm

Let ${\mathbf i} = (i_1, i_2, \ldots)$ be an infinite sequence of indices in $I$ such that every $i \in I$ appears infinitely many times. Set
\begin{equation} \label{eq:Bseq}
B_{\mathbf{i}} = \{ b = \cdots \otimes {\mathbf c}_k \otimes \cdots \otimes {\mathbf c}_1 \mid {\mathbf c}_{k} \in B_{i_k}, \ {\mathbf c}_{k}=(0)_{i_k} \ \text{for} \ k \gg 0 \}.
\end{equation}

\vskip 2mm

Using the tensor product rule, there is a crystal structure on $B_{\mathbf{i}}$ defined as follows. (See \cite{JKKS2007} for a more rigorous and detailed treatment.)

\vskip 2mm

Let $b = \cdots \otimes {\mathbf c}_k \otimes \cdots \otimes {\mathbf c}_1$, where ${\mathbf c}_k \in B_{i_k}$. Then we define
\begin{equation*}
\begin{aligned}
\text{wt}(b) & = - \sum_{k \ge 1}|\mathbf{c}_{k}| \, \alpha_{i_k}, \\
\epsilon_{i}(b) & = \begin{cases}
\text{max} \{c_k + \sum_{p>k} {c}_p \, a_{i, i_p}  \mid k \ge 1, \ i_k =i \}  & \text{if} \ i \in I^{\text{re}}, \\
0  & \text{if} \ i \in I^{\text{im}},
\end{cases} \\
\phi_{i}(b) & = \begin{cases}
\text{max}\{-c_k - \sum_{1 \le p <k} c_p \, a_{i, i_p}  \mid k\ge 1, i_k=i \}  & \text{if} \ i \in I^{\text{re}}, \\
-\sum_{k \ge 1} |{\mathbf c}_{k}| a_{i, i_k}  & \text{if} \ i \in I^{\text{im}}.
\end{cases}
\end{aligned}
\end{equation*}

To define the Kashiwara operators, we first assume $i \in I^{\text{re}}$ and let $s$ (resp. $t$)  be the  largest (resp. smallest) integer $k \ge 1$ such that
\begin{itemize}
\item[(i)] $i_k = i$,
\item[(ii)] $c_k + \sum_{p>k} c_p\,a_{i, i_p} = \epsilon_i(b)$.
\end{itemize}
Then we define
\begin{equation*}
\begin{aligned}
& \widetilde{e}_{i}(b) = \begin{cases} \cdots \otimes \mathbf{c}_{s+1} \otimes (c_s-1) \otimes \mathbf{c}_{s-1}  \otimes \cdots \otimes {\mathbf c}_1 & \text{if} \ \epsilon_{i}(b) > 0, \\
0 & \text{otherwise},
\end{cases} \\
& \widetilde{f}_{i}(b) = \cdots \otimes \mathbf{c}_{t+1} \otimes (c_t +1) \otimes {\mathbf c}_{t-1}  \otimes \cdots \otimes {\mathbf c}_1.
\end{aligned}
\end{equation*}

Suppose $i \in I^{\text{im}}$ and let $r$ be the smallest integer $k \ge 1$ such that
\begin{itemize}
\item[(i)] $i_k = i$,
\item[(ii)] $\sum_{p>k} |{\mathbf c}_{p}| \,a_{i,i_p}  =0$.
\end{itemize}
Then we define
$$\widetilde{f}_{i,l}(b) = \cdots \otimes {\mathbf c}_{r+1} \otimes (l, {\mathbf c}_{r}) \otimes {\mathbf c}_{r-1} \otimes \cdots \otimes {\mathbf c}_{1}.$$

Assume further that
\begin{itemize}
\item[(i)] the 1st component of ${\mathbf c}_{r}$ is $l$ or ${\mathbf c}_{r}$ is a partition having $l$ as a part,
\item[(ii)] $\sum_{s<p\le r} |{\mathbf c}_{p}| \, a_{i, i_p} < l a_{ii}$ for any $s$ with $i_s =i$ and $1 \le s < r$.
\end{itemize}
In this case, we define
$$\widetilde{e}_{i,l}(b) =
\cdots \otimes {\mathbf c}_{r+1} \otimes ({\mathbf c}_{r} \setminus l) \otimes {\mathbf c}_{r-1} \otimes \cdots \otimes {\mathbf c}_{1}.$$

Otherwise, we define $\widetilde{e}_{il}(b)=0$.

\vskip 2mm

For each $N\ge 1$, along the sequence $\mathbf{i} =(i_1, i_2, \ldots)$, we apply the crystal embedding theorem repeatedly to get a strict crystal embedding
\begin{equation*}
\Psi^{(N)}\colon B(\infty) \hookrightarrow B(\infty) \otimes B_{i_1} \hookrightarrow B(\infty) \otimes B_{i_2} \otimes B_{i_1} \hookrightarrow \cdots \hookrightarrow B(\infty) \otimes B_{i_N} \otimes \cdots \otimes B_{i_1}.
\end{equation*}

For each $b \in B(\infty)$, it is easy to see that there exists some $N \ge 1$ satisfying
$$\Psi^{(N)}(b) = \mathbf{1} \otimes {\mathbf c}_{N}  \otimes \cdots \otimes {\mathbf c}_{1} \in B(\infty) \otimes B_{i_N} \otimes \cdots \otimes B_{i_1}.$$
Hence to each $b \in B(\infty)$, one can associate a unique element
$$b_{\mathbf{i}} =  \cdots \otimes (0)_{i_{N+1}} \otimes {\mathbf c}_{N}  \otimes \cdots \otimes {\mathbf c}_{1} \in B_{\mathbf{i}} ,$$
which yields a strict crystal embedding
$$\Psi_{\mathbf{i}}\colon B(\infty) \hookrightarrow B_{\mathbf{i}}.$$
Therefore $B(\infty)$ is isomorphic to the connected component of $B_{\mathbf{i}}$ containing the element $(0)_{\mathbf{i}} = \cdots \otimes (0)_{i_{N}} \otimes \cdots \otimes (0)_{i_1} $. In particular, $\mathbf{1}$ is mapped onto $(0)_{\mathbf{i}}$.

\vskip 2mm

We will now give a characterization of $B(\infty)$ as an application of the crystal embedding theorem.

\vskip 2mm

\begin{theorem} \label{thm:Binfty}

{\rm Let $B$ be a crystal satisfying the following conditions.

\begin{itemize}
\item[(i)] $\text{wt}(B) \subset Q^{-}$,
\item[(ii)] there exists an element $b_{0} \in B$ such that $\text{wt}(b_{0}) =0$,
\item[(iii)] for any $b \neq b_0$, there is some $(i, l) \in I^{\infty}$ such that $\widetilde{e}_{il}\, b \neq 0$,
\item[(iv)] for each $i \in I$, there exists a strict crystal embedding $\Psi_{i}\colon B \hookrightarrow B \otimes B_{i}$.
\end{itemize}

\vskip 2mm

Then there exists a crystal isomorphism
\begin{equation*}
B  \overset{\sim} \longrightarrow B(\infty)  \ \ \text{given by} \ \  b_{0} \mapsto  \mathbf{1}.
\end{equation*}
}
\end{theorem}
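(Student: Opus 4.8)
The plan is to realize both $B$ and $B(\infty)$ inside one common ambient crystal and then compare their images. Fix once and for all an infinite sequence $\mathbf{i}=(i_1,i_2,\ldots)$ in which every index of $I$ occurs infinitely often, and recall from the discussion preceding the theorem that there is a strict embedding $\Psi_{\mathbf{i}}\colon B(\infty)\hookrightarrow B_{\mathbf{i}}$ identifying $B(\infty)$ with the connected component of $B_{\mathbf{i}}$ containing $(0)_{\mathbf{i}}$, and sending $\mathbf{1}\mapsto(0)_{\mathbf{i}}$. The whole argument will run by producing a parallel embedding of $B$.

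First I would record that the hypotheses force $B$ to be connected with $b_0$ as its unique highest weight vector. By (i), $\text{ht}(-\text{wt}(b))$ is a well-defined nonnegative integer for every $b\in B$, and each application of a raising operator $\widetilde{e}_{il}$ strictly decreases it; by (iii), so long as $b\neq b_0$ some $\widetilde{e}_{il}b\neq 0$, so iterating must terminate at a weight-$0$ element, which by (i)--(ii) can only be $b_0$. In particular $b_0$ is the unique vector of weight $0$ and every element is connected to $b_0$.

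Next, using (iv), I would build a strict embedding $B\hookrightarrow B_{\mathbf{i}}$ exactly as was done for $B(\infty)$: composing the maps $\Psi_{i_k}$ along $\mathbf{i}$ gives strict embeddings
$$\Psi^{(N)}\colon B\hookrightarrow B\otimes B_{i_N}\otimes\cdots\otimes B_{i_1},\qquad b\mapsto b^{(N)}\otimes\mathbf{c}_N\otimes\cdots\otimes\mathbf{c}_1 .$$
The finiteness I must verify is that this stabilizes. Comparing weights yields $\text{ht}(-\text{wt}(b))=\text{ht}(-\text{wt}(b^{(N)}))+\sum_{k=1}^{N}|\mathbf{c}_k|$, so only finitely many $\mathbf{c}_k$ are nonzero and, once $\sum_k|\mathbf{c}_k|$ attains its maximum, $b^{(N)}$ has weight $0$ and hence equals $b_0$. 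Thus each $b$ determines a well-defined element $b_{\mathbf{i}}\in B_{\mathbf{i}}$ and a strict embedding $\Psi^{B}_{\mathbf{i}}\colon B\hookrightarrow B_{\mathbf{i}}$ with $b_0\mapsto(0)_{\mathbf{i}}$. I expect this stabilization/finiteness step to be the only genuinely technical point; everything else is formal.

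Finally I would compare the two images. Since $B$ is connected and $\Psi^{B}_{\mathbf{i}}$ is strict, $\Psi^{B}_{\mathbf{i}}(B)$ lies in the connected component of $B_{\mathbf{i}}$ through $(0)_{\mathbf{i}}$, namely in $\text{Im}(\Psi_{\mathbf{i}})$; hence $\Phi:=\Psi_{\mathbf{i}}^{-1}\circ\Psi^{B}_{\mathbf{i}}\colon B\to B(\infty)$ is a well-defined injective strict crystal morphism with $b_0\mapsto\mathbf{1}$. For surjectivity I would use that $B(\infty)$ is connected and generated from $\mathbf{1}$ by the operators $\widetilde{f}_{il}$: if $\Phi(b)=x$ and $\widetilde{f}_{il}x\neq 0$, then strictness together with the convention $\Phi(0)=0$ forces $\widetilde{f}_{il}b\neq 0$ and $\Phi(\widetilde{f}_{il}b)=\widetilde{f}_{il}x$, so $\text{Im}(\Phi)$ contains $\mathbf{1}$ and is stable under every $\widetilde{f}_{il}$. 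Connectedness of $B(\infty)$ then gives $\text{Im}(\Phi)=B(\infty)$, so $\Phi$ is a bijection and, by Remark \ref{rem:iso}(b), a crystal isomorphism, completing the proof.
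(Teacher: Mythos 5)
Your proposal takes the same route as the paper's proof: iterate the embeddings of hypothesis (iv) along $\mathbf{i}$, obtain a strict embedding $B \hookrightarrow B_{\mathbf{i}}$, and identify both $B$ and $B(\infty)$ with the connected component of $(0)_{\mathbf{i}}$. Your preliminary observations (connectivity of $B$, uniqueness of $b_0$) and your concluding surjectivity argument are correct, and indeed more explicit than what the paper writes. However, the step you yourself single out as the only technical point is exactly where your argument breaks down, and the gap is genuine.

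The inference ``once $\sum_k |\mathbf{c}_k|$ attains its maximum, $b^{(N)}$ has weight $0$'' is a non sequitur. The identity $\text{ht}(-\text{wt}(b)) = \text{ht}(-\text{wt}(b^{(N)})) + \sum_{k=1}^{N}|\mathbf{c}_k|$ shows that the sum is non-decreasing in $N$ and bounded, hence eventually constant; it does not show that it reaches the upper bound $\text{ht}(-\text{wt}(b))$, i.e., that the residual weight of $b^{(N)}$ ever vanishes. Nothing in weight bookkeeping rules out that, from some point on, every $\Psi_{i_N}$ sends $b^{(N-1)}$ to $b^{(N)} \otimes (0)_{i_N}$ with $b^{(N)} \neq b_0$ forever. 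Note that your argument for this step never invokes hypothesis (iii), which is precisely what must force more weight to be shed whenever $b^{(N)} \neq b_0$. The argument the paper relies on (this is the content of the sentence ``As we have seen in the discussion above the theorem\dots'') is an induction on height: write $b = \widetilde{f}_{jl}\,b'$ with $b' = \widetilde{e}_{jl}\,b \neq 0$ supplied by (iii), get $\Psi^{(M)}(b') = b_0 \otimes \mathbf{c}'_{M} \otimes \cdots \otimes \mathbf{c}'_{1}$ by induction, choose $N > M$ so that $j$ occurs among $i_{M+1}, \ldots, i_{N}$, and then check via the $\epsilon_j/\phi_j$ comparisons in the tensor product rule \eqref{eq:real}, \eqref{eq:im} that $\widetilde{f}_{jl}$ falls on the elementary factors rather than on $b_0$, using $\phi_j(b_0) \le 0 = \epsilon_j((0)_j)$. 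This $\epsilon$--$\phi$ analysis cannot be replaced by weight counting. Concretely, for the rank-one datum $a_{ii}=2$, equip the chain $b_0 \rightarrow b_1 \rightarrow b_2 \rightarrow \cdots$ with the shifted structure functions $\epsilon_i(b_n) = n+1$, $\phi_i(b_n) = 1-n$; then $b_0 \mapsto b_0 \otimes (0)$, $b_n \mapsto b_1 \otimes (n-1)$ for $n \ge 1$ is a strict embedding, conditions (i)--(iii) hold as well, yet the first component stabilizes at $b_1 \neq b_0$. (This example also shows that the stabilization mechanism tacitly uses the normalization $\epsilon_i(b_0) = 0$ enjoyed by $B(\infty)$; it is invisible in your weight-only argument.) So your proposal is missing the key mechanism of the proof, and the step as you wrote it would fail.
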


\begin{proof} \ Note that for any element $b \in B$ with $\text{wt}(b)=0$, we have $\widetilde{e}_{il}(b)=0$ for all $(i,l) \in I^{\infty}$, for  otherwise, $\text{wt}(\widetilde{e}_{il}\,b) = l \alpha_{i} \notin Q^{-}$.  Hence by the condition (iii), if $\text{wt}(b)=0$, then $b=b_0$,
which implies $b_{0}$ is the only element in $B$ such that  $\text{wt}(b)=0$. It follows that the crystal embedding $\Psi_{i}$ maps $b_{0}$ to $b_{0} \otimes (0)_{i}$.

\vskip 2mm

Take an infinite sequence $\mathbf{i} = (i_1, i_2, \ldots)$ such that every $i \in I$ appears infinitely many times. Then for each $N \ge 1$, we obtain a strict embedding
$$\Psi^{(N)}\colon B\hookrightarrow B \otimes B_{i_1} \hookrightarrow B \otimes B_{i_2} \otimes B_{i_1} \hookrightarrow \cdots \hookrightarrow B \otimes B_{i_N} \otimes \cdots \otimes B_{i_1}$$
sending $b_{0}$ to $b_{0} \otimes (0)_{i_N} \otimes \cdots \otimes (0)_{i_1}$.

\vskip 2mm

As we have seen in the discussion above the theorem, for each $b \in B$, there exists $N\ge 1$ such that
$$\Psi^{(N)}(b) = b_{0} \otimes {\mathbf c}_{N} \otimes \cdots \otimes {\mathbf c}_{1},$$
which yields a strict crystal embedding
$$B \hookrightarrow B(\mathbf{i}) \ \ \text{given by} \ \ b \mapsto \cdots \otimes (0)_{i_{N+1}}
\otimes {\mathbf c}_{N} \otimes \cdots \otimes {\mathbf c}_{1} \, ({\mathbf c}_{k} \in B_{i_k}).$$
Thus $B$ is isomorphic to the connected component of $B_{\mathbf{i}}$ containing the element $(0)_{\mathbf{i}} = \cdots \otimes (0)_{i_{N}} \otimes \cdots \otimes (0)_{i_1}$, which implies $B$ is isomorphic to $B(\infty)$ and $b_{0}$ is mapped to $\mathbf{1}$.
\end{proof}

\vskip 2mm

Let us turn to the crystal  $B(\lambda)$ $(\lambda \in P^{+})$. The properties of the crystals $T_{\lambda}=\{t_{\lambda}\}$ and $C = \{c\}$ introduced in Example \ref{ex:Tlambda} will play an important role in our characterization of $B(\lambda)$.

\vskip 2mm

For $\lambda \in P^{+}$, consider the injective map
$$\psi_{\lambda}\colon B(\lambda) \rightarrow B(\infty) \otimes T_{\lambda} \ \ \text{given by} \ \ b \mapsto \pi_{\lambda}(b) \otimes t_{\lambda},$$
where $\pi_{\lambda}\colon B(\lambda) \rightarrow B(\infty)$ is the injective map given in Proposition \ref{P1}. One can immediately see that $\psi_{\lambda}$ preserves the function $\text{wt}$. Since $\epsilon_{i}(t_{\lambda}) = \phi_{i}(t_{\lambda})= -\infty$, for $b \in B(\lambda)$, we have
\begin{equation*}
\begin{aligned}
\epsilon_{i}(b) & = \epsilon_{i}(\pi_{\lambda}(b))= \epsilon_{i}(\pi_{\lambda}(b) \otimes t_{\lambda}), \\
\phi_{i}(b) & = \epsilon_{i}(b) + \langle h_i, \text{wt}(b) \rangle \\
& = \epsilon_{i}(\pi_{\lambda}(b) \otimes t_{\lambda}) + \langle h_i, \text{wt}(\pi_{\lambda}(b) \otimes t_{\lambda}) \rangle \\
& =  \phi_{i}(\pi_{\lambda}(b) \otimes t_{\lambda}).
\end{aligned}
\end{equation*}
Hence $\psi_{\lambda}$ preserves $\epsilon_{i}$, $\phi_{i}$ $(i \in I)$.

\vskip 2mm

If $b \in B(\lambda)$ and $\widetilde{f}_{il}\, b \in B(\lambda)$, Proposition \ref{P1} yields
\begin{equation*}
\widetilde{f}_{il}(\psi_{\lambda}(b)) = \widetilde{f}_{il}(\pi_{\lambda}(b) \otimes t_{\lambda})
= \widetilde{f}_{il}(\pi_{\lambda}(b)) \otimes t_{\lambda}
 = \pi_{\lambda}(\widetilde{f}_{il}\, b) \otimes t_{\lambda} = \psi_{\lambda}(\widetilde{f}_{il}\, b).
\end{equation*}
Moreover,
\begin{equation*}
\widetilde{e}_{il}(\psi_{\lambda}(b)) = \widetilde{e}_{il}(\pi_{\lambda}(b) \otimes t_{\lambda}) = \widetilde{e}_{il} (\pi_{\lambda}(b)) \otimes t_{\lambda} = \pi_{\lambda} (\widetilde{e}_{il}\, b) \otimes t_{\lambda} =\psi_{\lambda}(\widetilde{e}_{il}\, b).
\end{equation*}
Thus $\psi_{\lambda}$ is a crystal morphism commuting with all $\widetilde{e}_{il}$'s.



\vskip 2mm

Consider the injective map
$$\iota_{\lambda}\colon B(\lambda) \rightarrow B(\infty) \otimes T_{\lambda} \otimes C \ \
\text{given by} \ b \mapsto
\psi_{\lambda}(b) \otimes c.$$

For $b \in B(\lambda)$, we have
\begin{equation*}
\begin{aligned}
\text{wt}(\iota_{\lambda}(b)) & = \text{wt}(\psi_{\lambda}(b)) + \text{wt}(c) = \text{wt}(b), \\
\phi_{i}(\iota_{\lambda}(b)) & = \text{max}(\phi_{i}(\psi_{\lambda}(b)) + \langle h_i, \text{wt}(c) \rangle, \phi_{i}(c)) \\
& = \text{max}(\phi_{i}(b), 0) = \phi_{i}(b), \\
\epsilon_{i}(\iota_{\lambda}(b))& = \phi_{i}(\iota_{\lambda}(b)) - \langle h_i,
\text{wt}(\iota_{\lambda}(b)) \rangle \\
& = \phi_{i}(b) - \langle h_i, \text{wt}(b) \rangle = \epsilon_{i}(b).
\end{aligned}
\end{equation*}

Furthermore, since $b \in B(\lambda)$, we have $\phi_{i}(\psi_{\lambda}(b)) = \phi_{i}(b) \ge 0 = \epsilon_{i}(c)$. Thus if $i \in I^{\text{re}}$, then
\begin{equation*}
\begin{aligned}
\widetilde{e}_{i}(\iota_{\lambda}(b)) &= \widetilde{e}_{i}(\psi_{\lambda}(b) \otimes c)
= \widetilde{e}_{i}(\psi_{\lambda}(b)) \otimes c \\
&=\psi_{\lambda} (\widetilde{e}_{i}\, b) \otimes c
= \iota_{\lambda} (\widetilde{e}_{i}\,b),\\
\widetilde{f}_{i}(\iota_{\lambda}(b)) &= \widetilde{f}_{i}(\psi_{\lambda}(b) \otimes c)\\
&=\begin{cases} \widetilde{f}_{i} \psi_{\lambda}(b) \otimes c
= \psi_{\lambda}(\widetilde{f}_{i} \,b) \otimes c \ \ & \text{if} \ \phi_{i}(b)>0, \\
0 \ \  & \text{if} \ \phi_{i}(b) = 0,
\end{cases} \\
& = \iota_{\lambda}(\widetilde{f}_{i}\, b).
\end{aligned}
\end{equation*}

\vskip 2mm

If $i \in I^{\text{im}}$ and $\phi_{i}(b) = \phi_{i}(\psi_{\lambda}(b)) > -l a_{ii}$, then we have
$$\widetilde{e}_{il}(\iota_{\lambda}(b)) = \widetilde{e}_{il}(\psi_{\lambda}(b) \otimes c)
= \widetilde{e}_{il}(\psi_{\lambda}(b)) \otimes c = \psi_{\lambda} (\widetilde{e}_{il}\, b) \otimes c
=\iota_{\lambda}(\widetilde{e}_{il}\, b).$$

\vskip 2mm

If $i \in I^{\text{im}}$ and $0 < \phi_{i}(b)= \phi_{i}(\psi_{\lambda}(b)) \le - l a_{ii}$, then
\begin{equation*}
 \widetilde{e}_{il} (\iota_{\lambda}(b)) =\widetilde{e}_{il}(\psi_{\lambda}(b) \otimes c) =0.
\end{equation*}

\vskip 2mm

On the other hand, since $\epsilon_{i}(b)=0$, our condition implies
$$0 < \phi_{i}(b) = \langle h_i, \text{wt}(b) \rangle \le - l a_{ii}.$$
By the definition of category ${\mathcal O}_{\text{int}}$ (\cite[Definition 5.1]{Kang2019b}), we have $\widetilde{e}_{il}(b) =0$, which implies
$$\widetilde{e}_{il}(\iota_{\lambda}(b) ) = \iota_{\lambda}(\widetilde{e}_{il}\, b)=0$$
as desired.

\vskip 2mm

If $i \in I^{\text{im}}$ and $\phi_{i}(b) = \phi_{i}(\psi_{\lambda}(b))=0$, then $\langle h_i, \text{wt}(b) \rangle=0$, which implies $\widetilde{e}_{il}(b)=0$ and
\begin{equation*}
\widetilde{e}_{il}(\iota_{\lambda}(b)) = \widetilde{e}_{il}(\psi_{\lambda}(b) \otimes c)
=\psi_{\lambda}(b)\otimes \widetilde{e}_{il}(c)=0
=  \psi_{\lambda}(\widetilde{e}_{il}(b)) \otimes c
= \iota_{\lambda}(\widetilde{e}_{il}(b)).
\end{equation*}

\vskip 2mm

For the operators $\widetilde{f}_{il}$,
if $i \in I^{\text{im}}$ and $\phi_{i}(b) =\phi_{i}(\psi_{\lambda}(b)) > 0$, then
$$\widetilde{f}_{il}(\iota_{\lambda}(b)) = \widetilde{f}_{il}(\psi_\lambda(b) \otimes c)
=\widetilde{f}_{il}(\psi_\lambda(b)) \otimes c =\psi_{\lambda} (\widetilde{f}_{il}(b)) \otimes c
=\iota_{\lambda}(\widetilde{f}_{il}\, b).$$
If $i \in I^{\text{im}}$ and $\phi_{i}(b) =0$, then $\widetilde{f}_{il}(b) = 0$ and hence
$$\widetilde{f}_{il} (\iota_{\lambda}(b)) = \widetilde{f}_{il}(\psi_{\lambda}(b) \otimes c)
=\psi_{\lambda}(b) \otimes \widetilde{f}_{il}\, c = 0 = \iota_{\lambda}(\widetilde{f}_{il}\, b)$$
as desired.

\vskip 2mm

Therefore, $\iota_{\lambda}\colon B(\lambda) \rightarrow B(\infty) \otimes T_{\lambda} \otimes C$ is a strict crystal embedding and obtain the following characterization of $B(\lambda)$.

\vskip 2mm

\begin{theorem} \label{thm:Blambda}
{\rm Let $\lambda \in P^{+}$ be a dominant integral weight. Then the crystal $B(\lambda)$ is isomorphic to the connected component of $B(\infty) \otimes T_{\lambda}\otimes C$ containing $\mathbf{1} \otimes  t_{\lambda} \otimes c$.
}
\end{theorem}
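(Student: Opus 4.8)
The plan is to use the strict crystal embedding
$$\iota_{\lambda}\colon B(\lambda) \rightarrow B(\infty) \otimes T_{\lambda} \otimes C, \qquad \iota_{\lambda}(b) = \psi_{\lambda}(b) \otimes c = (\pi_{\lambda}(b) \otimes t_{\lambda}) \otimes c,$$
which has just been constructed in the discussion preceding the statement. Since that map has already been verified to preserve $\text{wt}$, $\epsilon_i$, $\phi_i$ $(i \in I)$ and to commute with every $\widetilde{e}_{il}$ and $\widetilde{f}_{il}$, and since it is injective, it realizes $B(\lambda)$ as a full subcrystal of $B(\infty) \otimes T_{\lambda} \otimes C$. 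What remains is purely to identify this full subcrystal with the connected component named in the statement.

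First I would locate the image of the highest weight vector. Using $\pi_{\lambda}(v_{\lambda}) = \mathbf{1}$ from Proposition \ref{P1}, one computes
$$\iota_{\lambda}(v_{\lambda}) = \psi_{\lambda}(v_{\lambda}) \otimes c = (\pi_{\lambda}(v_{\lambda}) \otimes t_{\lambda}) \otimes c = \mathbf{1} \otimes t_{\lambda} \otimes c,$$
so the prescribed element $\mathbf{1} \otimes t_{\lambda} \otimes c$ indeed lies in $\text{Im}\,\iota_{\lambda}$.

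Next I would argue that $\text{Im}\,\iota_{\lambda}$ is a union of connected components of $B(\infty) \otimes T_{\lambda} \otimes C$. Because $\iota_{\lambda}$ is strict and $\iota_{\lambda}(0) = 0$, applying a Kashiwara operator to an element $\iota_{\lambda}(b)$ of the image yields $\widetilde{f}_{il}\,\iota_{\lambda}(b) = \iota_{\lambda}(\widetilde{f}_{il}\, b)$, and likewise for $\widetilde{e}_{il}$; in either case the result is again either in the image or equal to $0$. Hence no arrow of the crystal graph can leave $\text{Im}\,\iota_{\lambda}$, so the image is closed under the crystal graph structure and is therefore a union of connected components.

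Finally, since $B(\lambda)$ is connected by Example \ref{ex:Blambda} and $\iota_{\lambda}$ is a crystal morphism, its image is connected, hence equals a single connected component; as that component contains $\mathbf{1} \otimes t_{\lambda} \otimes c$, it is precisely the connected component of that element. Composing the isomorphism $B(\lambda) \overset{\sim}{\longrightarrow} \text{Im}\,\iota_{\lambda}$ with this identification yields the theorem. I expect no serious obstacle here: the substantive work — the case analysis over $I^{\text{re}}$, $I^{\text{im}} \setminus I^{\text{iso}}$ and $I^{\text{iso}}$ establishing that $\iota_{\lambda}$ is strict, and in particular the verification that $\widetilde{e}_{il}(b) = 0$ on $B(\lambda)$ exactly when the tensor-product rule forces the operator to annihilate $\psi_{\lambda}(b) \otimes c$ — has already been carried out before the statement, leaving only the formal closure-and-connectedness argument above.
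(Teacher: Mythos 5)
Your proposal is correct and follows essentially the same route as the paper: the paper's proof of this theorem consists precisely of the construction of the strict crystal embedding $\iota_{\lambda}\colon B(\lambda) \rightarrow B(\infty) \otimes T_{\lambda} \otimes C$ in the preceding discussion, with the identification of the image as the connected component of $\mathbf{1} \otimes t_{\lambda} \otimes c$ left implicit. Your explicit closure-and-connectedness argument (image closed under all $\widetilde{e}_{il}$, $\widetilde{f}_{il}$ by strictness, hence a union of components, hence a single component by connectedness of $B(\lambda)$) is exactly the standard step the paper omits, so there is nothing to correct.
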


\begin{remark}
(a) As was mentioned at the end of \cite{Bozec2014c}, our characterization can be applied to reproduce Bozec's geometric realization of $B(\infty)$ and $B(\lambda)$ (cf. \cite{KKS2009, KKS2012}).

(b) Our constuction of $B(\infty)$ and $B(\lambda)$ can also be used directly to the theory of (dual) perfect bases for quantum Borcherds-Bozec algebras.

(c) Our theory will play a crucial role in the categorification of quantum Borcherds-Bozec algebras and their highest weight modules via Khovanov-Lauda-Rouquier algebras and their cyclotomic quotients.

\end{remark}

\vspace{10pt}

\end{document}